\newtheorem{theorem}{Theorem}[section]
\newtheorem{lemma}[theorem]{Lemma}
\newtheorem{proposition}[theorem]{Proposition}
\newtheorem{corollary}[theorem]{Corollary}
\theoremstyle{definition}
\newtheorem{definition}[theorem]{Definition}
\newtheorem{remark}[theorem]{Remark}
\newtheorem{example}[theorem]{Example}
\newtheorem*{remark*}{Remark}
\newtheorem*{definition*}{Definition}
\declaretheoremstyle[notefont=\bfseries,notebraces={}{},%
    headpunct={},postheadspace=1em]{mystyle}
\declaretheorem[style=mystyle,numbered=no,name=Theorem]{thm-hand}
\newcommand{\Z}{\mathbf{Z}}
\newcommand{\C}{\mathbf{C}}
\newcommand{\bea}          {\begin{eqnarray}}
\newcommand{\eea}          {\end{eqnarray}}
\newcommand{\beastar}          {\begin{eqnarray*}}
\newcommand{\eeastar}          {\end{eqnarray*}}
\newcommand{\supp}{\textnormal{supp }}
\begin{document}

\date{\today}
\title{A $K$-theoretic Selberg trace formula}

\author[Mesland]{Bram Mesland}
\address{\normalfont{Mathematisch Instituut, Universiteit Leiden, \\
Postbus 9512, 2300 RA Leiden, Netherlands}}
\email{brammesland@gmail.com}

\author[\c{S}eng\"un]{Mehmet Haluk \c{S}eng\"un}
\address{\normalfont{School of Mathematics and Statistics,\\
University of Sheffield,
Hicks Building,\\
Hounsfield Road, Sheffield, S3 7RH, UK}}
\email{m.sengun@sheffield.ac.uk}

\author[Wang]{Hang Wang}
\address{\normalfont{Research Center for Operator Algebras, School of Mathematical Sciences\\
East China Normal University, Science building A505, 3663 North Zhongshan Rd,\\
Shanghai 200062, P.R.China}}
\email{wanghang@math.ecnu.edu.cn}

\begin{abstract} Let $G$ be a semisimple Lie group and $\Gamma$ a uniform lattice in $G$. The Selberg trace formula is an equality arising from computing in two different ways the traces of convolution operators on the Hilbert space $L^2(\Gamma \backslash G)$ associated to test functions $f \in C_c(G)$.

In this paper we present a cohomological interpretation of the trace formula involving the K-theory of the maximal group $C^*$-algebras of $G$ and $\Gamma$. As an application, we exploit the role of group $C^*$-algebras as recipients of ``higher indices" of elliptic differential operators and we obtain the index theoretic version of the Selberg trace formula developed by  Barbasch and Moscovici from ours.

\end{abstract}

\thanks{B. Mesland and M.H. \c{S}eng\"un gratefully acknowledge support from the Max Planck Institute for Mathematics in Bonn, Germany. H. Wang is supported by NSFC-11801178 and Shanghai Rising-Star Program 19QA1403200.}
\subjclass{Primary 58B34; Secondary: 46L80, 19K56}
\keywords{trace formula, K-theory, group $C^*$-algebra, uniform lattice}

\maketitle

\section{Introduction}
Let $G$ be a semisimple real Lie group and $\Gamma$ a cocompact lattice in $G$. A compactly supported smooth function on $G$ (a.k.a. a test function) acts on the Hilbert space $L^2(G/\Gamma)$ via convolution and it is well-known the bounded operator obtained from this action is of trace class. By computing this trace in two different ways, by ``summing the eigenvalues'' and by ``summing the diagonal entries", we arrive at Selberg's trace formula which has the following rough shape (see Section \ref{STF} for the precise formula):
$$ \sum_{\pi} m_\Gamma(\pi) \ {\rm tr}_\pi(f) = \sum_{(\gamma)} \mathcal{O}_{\gamma}(f).$$
The sum on the left hand side (the {\em spectral side}) runs over the irreducible unitary representations of $G$ and we sum, counting multiplicities, the trace of our operator on ``eigenspaces". The sum on the right hand side (the {\em geometric side}) runs over the conjugacy classes of $\Gamma$ and we sum over the {\em orbital integrals} which arise from the ``diagonal entries" of the kernel function associated to our operator.

The trace formula has fundamental applications in number theory. For example, the multiplicities $m_\Gamma(\pi)$ give dimensions of various spaces of automorphic forms. If $\pi$ is integrable, by plugging a carefully chosen test function (a {\em pseudo-coefficient}) into the trace formula, one can isolate $m_\Gamma(\pi)$ on the spectral side and thus get an explicit formula for it via the geometric side. Another important application is to the functoriality principle in the Langlands programme. For this application, one compares trace formulas on different groups.

In this paper, we present a cohomological carnation of the trace formula. We do this by considering the $K$-theory groups of the group $C^*$-algebras associated to $G$ and $\Gamma$, and interpreting the terms that appear in the trace formula in $K$-theoretic terms. Our beginning point is a simple commutative diagramme that is a $K$-theoretic manifestation of the fact that the quasi-regular representation $R^\Gamma$ of $G$ on $L^2(G/\Gamma)$ is the induction to $G$ of the trivial representation ${\bf 1}$ of $\Gamma$:
$$
\xymatrix@+2pc{K_0(C^*(G)) \ar[r]^{R^\Gamma_*} \ar[d]_{{\rm res}} & \Z \\
                     K_0(C^*(\Gamma)) \ar[ur]_{\mathbf{1}_*}&            }
$$
The homomorphism $R^\Gamma_*$ (resp. $\mathbf{1}_*$) arises via functoriality from $R_\Gamma$ (resp. $\mathbf{1}$) and the vertical arrow is the restriction homomorphism given by the Mackey-Rieffel theory of imprimitivity bimodules. We then use the spectral decomposition of  $R^\Gamma$ into irreducible unitary representations $\pi$ to decompose $R^\Gamma_*$ into homomorphisms $\pi_*$, thus obtaining the spectral side of our K-theoretic trace formula. For the geometric side, we decompose $1_*$ over the conjugacy classes of $\Gamma$. The decomposition starts at the level of the convolution algebra $L^1(\Gamma)$ of integrable functions on $\Gamma$. At this level, the partitioning of $\Gamma$ into its conjugacy classes $(\gamma)$ gives rise to a decomposition of the *-homomorphism ${\bf 1} : L^1(\Gamma) \to \C$ into maps $\tau_\gamma$ satisfying the trace property (i.e.  degree 0 cyclic cohains). Just for this introduction, let us assume that $\Gamma$ has ``controlled growth" (e.g. Gromov hyperbolic). Then it is possible to continuously extend the maps $\tau_\gamma$ to a holomorphically closed dense subalgebra of $C^*(\Gamma)$ and thus obtain a decomposition of ${\bf 1}_*$ into $\tau_{\gamma, *}$ at the level of $K_0(C^*(\Gamma))$.

We arrive at the following $K$-theoretic trace formula:
\begin{equation} \label{main-result}
\sum_{\pi\in\widehat G}m_{\Gamma}(\pi)\pi_*(x)=\sum_{(\gamma)\in\langle\Gamma\rangle}\tau_{\gamma, *}(\mathrm{res}(x)).
\end{equation}
for every $x \in K_0(C^*(G))$. The general result, which does not make any assumptions on $\Gamma$ unlike we did above, is given in Theorem \ref{thm:KSTF}.

An important feature of group $C^*$-algebras is that they are the recipients of the so-called {\em higher indices}. Let $K$ denote a maximal compact subgroup of $G$. Let $D$ denote the Dirac operator on the symmetric space $G/K$, possibly twisted by a finite dimensional representation of $K$ and denote by $\mathrm{Ind}_GD$ the higher index of $D$ over $C^*(G)$. We show that if one takes $x=[\mathrm{Ind}_GD]$ in Equation (\ref{main-result}), then we obtain, crucially using results of \cite{WW16}, the index theoretic version of the Selberg trace formula developed by Barbasch and Moscovici \cite{BM83}:
\begin{equation}
\label{eq:SelbergTraceFormulaHeatOp}
\sum_{\pi\in\widehat G}m_{\Gamma}(\pi)\mathrm{Tr}_s\pi(k_t)=\sum_{(\gamma)\in\langle\Gamma\rangle} \int_{\Gamma_{\gamma}\backslash G}\mathrm{Tr}_s k_t(x^{-1}\gamma x)dx.
\end{equation}
Here $t>0$ and $k_t$ is the kernel associated to the heat operator $e^{-tD^2}$ viewed as a matrix valued smooth function on $G$ with $\mathrm{Tr}_s$ denoting the supertrace.

From a larger perspective, our result allows for the use of techniques from the representation theory of the semisimple Lie group $G$ in the study of the $K$-theory of the  lattice $\Gamma$. It is part of a program that explores the use of operator algebras and $K$-theory in the theory of automorphic forms. See \cite{MS-1, MS-2} for other such results.

\section{Statement of Selberg trace formula} \label{STF}
We will now review the trace formula of Selberg in the setting of cocompact lattices in Lie groups (see, for example, \cite{Ar:Clay05} for a detailed general account). Let $G$ be a semisimple Lie group, $K$ a maximal compact subgroup and $\Gamma$ a uniform lattice in $G$.
Let $f\in C_c^{\infty}(G)$ and denote by $R^\Gamma: G\rightarrow \mathcal{U}(L^2(\Gamma\backslash G))$ the right regular representation which extends to $R^\Gamma: C_c^{\infty}(G)\rightarrow\mathcal{L}(L^2(\Gamma\backslash G))$ via
\[
R^\Gamma(f)=\int_G f(y)R^\Gamma(y)dy \qquad f\in C_c^{\infty}(G).
\]
The operator $R^\Gamma(f)$ is trace class (see below) and the Selberg trace formula is an equality arising from computing the trace of $R^\Gamma(f)$ in two different ways.

On the ``geometric side", regarding $\phi\in L^2(\Gamma\backslash G)$ as a $\Gamma$-invariant function on $G$, and from
\begin{align*}
[R^\Gamma(f)\phi](x)=&\int_Gf(y)[R^\Gamma(y)\phi](x)dy
=\int_G f(y)\phi(xy)dy\\
=&\int_G f(x^{-1}y)\phi(y)dy
=\int_{\Gamma\backslash G}\sum_{\gamma\in\Gamma}f(x^{-1}\gamma y)\phi(y)dy,
\end{align*}
we obtain the Schwartz kernel
\[
K(x,y)=\sum_{\gamma\in\Gamma}f(x^{-1}\gamma y),
\]
associated to $f$. The fact that $f$ is compactly supported and smooth ensures that the sum is locally finite and thus $K$ is a smooth kernel. Coupled with the fact that 
$\Gamma\backslash G$ is compact, we deduce that $R^\Gamma(f)$ is trace class. We have

\begin{align*}
\mathrm{Tr} R^\Gamma(f)=&\int_{\Gamma\backslash G}K(x, x)dx=\int_{\Gamma\backslash G}\sum_{\gamma\in\Gamma}f(x^{-1}\gamma x)dx\\
=&\int_{\Gamma\backslash G}\sum_{(\gamma) \in\langle\Gamma\rangle}\sum_{\delta\in\Gamma_{\gamma}\backslash \Gamma}f(x^{-1}\delta^{-1}\gamma\delta x)dx\\
=&\sum_{(\gamma) \in\langle \Gamma\rangle}\mathrm{vol}(\Gamma_{\gamma}\backslash G_{\gamma})\int_{G_{\gamma}\backslash G}f(x^{-1}\gamma x)dx.
\end{align*}
As usual, in the above $\langle \Gamma\rangle$ denotes the conjugacy classes of $\Gamma$ and $\Gamma_\gamma$ (resp. $G_\gamma$) is the centralizer of $\gamma$
 in $\Gamma$ (resp. $G$).

On the ``spectral side", the Hilbert space $L^2(\Gamma \backslash G)$ splits as a direct sum
 \begin{equation}
 \label{irrepdecomp}
 L^2(\Gamma\backslash G)\simeq\bigoplus_{\pi\in\widehat G}H_{\pi}^{\oplus m_{\Gamma}(\pi)}
 \end{equation}
 of irreducible unitary representations $(\pi, H_\pi) \in \widehat G$ of $G$ with each appearing with finite multiplicity $m_{\Gamma}(\pi)$. 
By restricting $R^\Gamma(f)$ to the irreducible subspaces of $L^2(\Gamma\backslash G)$, we obtain the following from \eqref{irrepdecomp}, 
\[
\mathrm{Tr}R^\Gamma(f)=\sum_{\pi\in\widehat G} m_{\Gamma}(\pi)\mathrm{Tr}(\pi(f)).
\]
where $\pi(f)=\int_G f(y)\pi(y)dy$ (which can be regarded as the Fourier transform of $f$ at $\pi$).
The Selberg trace formula is the equality
\begin{equation}
\label{eq:SelbergTraceFormula}
\sum_{\pi\in\widehat G}m_{\Gamma}(\pi)\mathrm{Tr}(\pi(f))=\sum_{(\gamma) \in\langle \Gamma\rangle}\mathrm{vol}(\Gamma_{\gamma}\backslash G_{\gamma})\int_{G_{\gamma}\backslash G}f(x^{-1}\gamma x)dx.
\end{equation}

\begin{remark}
If $\Gamma\backslash G$ has finite volume but is noncompact, the spectral decomposition of $L^2(\Gamma\backslash G)$ also involves a continuous component and $R^{\Gamma}(f)$ is not necessarily a trace class operator. A truncated version of traces need to be introduced in the Selberg trace formula in this setting (see \cite{Ar:Clay05}).
\end{remark}

\section{Index theoretic trace formula}
In this section we formulate an analogue of the Selberg trace formula in the context of index theory, which will be proved using the framework of $K$-theory in Section~\ref{sec:K-theoreticSelbergTraceFormula}.

We {\bf assume} in addition that the rank of $G$ equals that of $K$. Note that as a consequence, the symmetric space $G/K$ is of even dimension.
Assume that $G/K$ admits a $G$-equivariant spin$^c$ structure and let $S=S^+ \oplus S^-$ denote the associated spinor bundle on $G/K$. Let $V$ be a finite dimensional representation of $K$ and $E=G\times_K V$ be the associated $G$-equivariant vector bundle on $G/K$. Let $D$ be the $G$-invariant Dirac operator on the bundle $G\times_K (V\otimes S)\rightarrow G/K$ with $\Z_2$-grading, i.e., $D$ has the form $\begin{bmatrix}0 & D^-\\ D^+ & 0\end{bmatrix}$. Consider the heat operator $e^{-tD^2}$, which admits a  smooth kernel $K_t(x, y)$. The kernel $K_{t}(x,y)$ satisfies
\[
K_t(gx, gy)=K_t(x, y) \qquad x, y\in G/K, \, g\in G,
\]
which is equivalent to the $G$-invariance of $D$.
Regarding $K_t(x, y)$ as a matrix-valued function on $G\times G$, invariant under the $K\times K$-action
\[
K_t(x, y)=aK_t(xa, yb)b^{-1}\in\mathrm{End}(V\otimes S) \qquad a, b\in K.
\]
We define
\[
k_t(x^{-1}y):=K_t(x, y)\qquad x, y\in G.
\]
Then $k_t$ is a matrix-valued function on $G$ that is invariant under the action of $K\times K$:
\[
k_t\in[C^{\infty}(G)\otimes\mathrm{End}(V\otimes S)]^{K\times K}, \qquad  k_t(x)=ak_t(a^{-1}xb)b^{-1}, \quad a,b\in K.
\]
Moreover, $k_t$ is smooth and of Schwartz type. To be more precise,
\[
k_t\in [\mathcal{S}(G)\otimes\mathrm{End}(V\otimes S)]^{K\times K}.
\]
where $\mathcal{S}(G)$ is Harish-Chandra's  Schwartz algebra of $G$.
See~{\cite[Section 1]{CM82}} and~{\cite[Section 2]{BM83}} for more details.

We now replace the test function $f\in C_c^{\infty}(G)$ acting on $L^2(\Gamma \backslash G)$ in the previous section with the heat kernel $k_t$ acting on the $\Z_2$-graded space $(L^2(\Gamma \backslash G) \otimes V \otimes S)^K$ and formally state the Selberg trace formula associated to the heat operator. We write $\pi(k_t)$ for the  operator obtained from the action of $k_t$ on
$(H_\pi \otimes (V\otimes S))^K$.

Choose an invariant Haar measure on $G$ so that it is compactible with $G$-invariant measure on $G/K$. This in particular means that the volume of the maximal compact subgroup $K$ of $G$ is assumed to be $1.$

\begin{proposition} Let $\mathrm{Tr}_s$ be the supertrace of $\Z_2$-graded vector spaces.
 Then
\begin{equation}
\label{eq:SelbergTraceFormulaHeatOp}
\sum_{\pi\in\widehat G}m_{\Gamma}(\pi)\mathrm{Tr}_s\pi(k_t)=\sum_{(\gamma) \in\langle\Gamma\rangle}\mathrm{vol}(\Gamma_{\gamma}\backslash G_{\gamma})\int_{G_{\gamma}\backslash G}\mathrm{Tr}_s [k_t(x^{-1}\gamma x)\gamma]dx.
\end{equation}
\end{proposition}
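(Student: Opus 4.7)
The plan is to adapt the derivation of the ordinary Selberg trace formula carried out in Section~\ref{STF}, replacing the scalar test function $f\in C_c^{\infty}(G)$ by the matrix-valued Schwartz function $k_t\in[\mathcal S(G)\otimes\mathrm{End}(V\otimes S)]^{K\times K}$, and working with the right regular representation of $G$ on the $\Z_2$-graded subspace $(L^2(\Gamma\backslash G)\otimes V\otimes S)^K$. Every step in Section~\ref{STF} will have a direct analogue; the role previously played by compact support of $f$ will be played by the Harish-Chandra Schwartz decay of $k_t$.

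First, I would establish the Schwartz kernel formula for the operator $R^\Gamma(k_t)$. Regarding an element of $(L^2(\Gamma\backslash G)\otimes V\otimes S)^K$ as a $\Gamma$-invariant, $K$-equivariant $V\otimes S$-valued function on $G$, and running through the formal calculation from Section~\ref{STF} now with $k_t$ acting by matrix multiplication on fibres, I would obtain
$$K_t^{\Gamma}(x,y)=\sum_{\gamma\in\Gamma}k_t(x^{-1}\gamma y)\in\mathrm{End}(V\otimes S).$$
The Schwartz decay of $k_t$ together with the cocompactness of $\Gamma$ forces this sum to converge absolutely and uniformly on compacta, producing a smooth matrix-valued kernel on the compact quotient $\Gamma\backslash G$; combined with compactness of $\Gamma\backslash G$ this gives that $R^\Gamma(k_t)$ is of trace class (this is exactly the input from \cite{CM82, BM83}).

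Next, I would compute the supertrace on the geometric side exactly as in Section~\ref{STF}, integrating $\mathrm{Tr}_s K_t^{\Gamma}(x,x)$ over the compact fundamental domain $\Gamma\backslash G$, partitioning $\Gamma$ into conjugacy classes, and unfolding the sum $\sum_{\delta\in\Gamma_\gamma\backslash\Gamma}$ against the integral over $\Gamma\backslash G$ to produce an integral over $\Gamma_\gamma\backslash G$, which factors as $\mathrm{vol}(\Gamma_\gamma\backslash G_\gamma)\int_{G_\gamma\backslash G}$. Absolute convergence, justified by the Schwartz estimates on $k_t$, licenses the interchange of summation and integration. For the spectral side I would use the decomposition \eqref{irrepdecomp}: tensoring with $V\otimes S$ and taking $K$-invariants commutes with Hilbert direct sums, so
$$(L^2(\Gamma\backslash G)\otimes V\otimes S)^K\cong\bigoplus_{\pi\in\widehat G}\bigl((H_\pi\otimes V\otimes S)^K\bigr)^{\oplus m_\Gamma(\pi)}$$
is $R^\Gamma(k_t)$-invariant, each summand realizes $\pi(k_t)$, and taking supertraces yields $\mathrm{Tr}_s R^\Gamma(k_t)=\sum_{\pi}m_\Gamma(\pi)\mathrm{Tr}_s\pi(k_t)$. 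Equating the two expressions for $\mathrm{Tr}_s R^\Gamma(k_t)$ proves the proposition.

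The main obstacle is the analytic one: verifying that $R^\Gamma(k_t)$ is trace class and that all the manipulations on the geometric side (interchanging $\sum_\gamma$ with $\int_{\Gamma\backslash G}$, reorganizing the double sum by conjugacy classes, and unfolding) are justified when $k_t$ is only of Schwartz type rather than compactly supported. Once one appeals to the standard Harish-Chandra Schwartz bounds on heat kernels recorded in \cite{CM82, BM83}, this obstacle is handled and the remainder of the argument is a mechanical transcription of the Section~\ref{STF} computation to the graded, matrix-valued setting.
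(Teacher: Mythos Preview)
Your approach is correct but takes a genuinely different route from the paper. You propose to adapt the classical derivation of Section~\ref{STF} directly to the matrix-valued Schwartz kernel $k_t$, obtaining both sides by computing $\mathrm{Tr}_s R^\Gamma(k_t)$ in two ways; this is essentially the Barbasch--Moscovici argument, and the analytic justification via Harish-Chandra Schwartz estimates that you identify as the main obstacle is exactly what is needed. The paper, by contrast, does \emph{not} prove the proposition directly at all: immediately after stating it, the authors announce that the equality will follow from the $K$-theoretic main result Theorem~\ref{thm:mainresult}. Their route is to first recast each side in index-theoretic terms (equations~(\ref{eq:loc.ind.ker}) and~(\ref{eq:ind.Dpi}), yielding Proposition~\ref{prop:index.th.STF}), and then to prove the resulting index identity $\sum_\pi m_\Gamma(\pi)\,\mathrm{ind}\,D_\pi=\sum_{(\gamma)}\mathrm{ind}_\gamma D$ via the commutative $K$-theory diagram built from the restriction bimodule ${\sf Res}$, the decomposition of $R^\Gamma_*$ (Corollary~\ref{R_*-decomposition}), and the decomposition of $\mathbf{1}_*$ over conjugacy classes. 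Your approach is more elementary and self-contained; the paper's approach is the point of the paper---it shows that the heat-kernel trace formula is a shadow of a $K$-theoretic identity, and in particular recovers it without rerunning the kernel computation.
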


The heat kernel $k_t$ is not compactly supported, however it is of $L^p$-Schwartz class for all $p>0$ (see \cite[Prop. 2.4.]{BM83}). Using this, one can prove that each summand on the right hand side of equality (\ref{eq:SelbergTraceFormulaHeatOp}) converges, see \cite[Thm. 6.2]{WW16}. In any case, the equality (\ref{eq:SelbergTraceFormulaHeatOp}) will follow from our main result Theorem~\ref{thm:mainresult} below.

In the rest of this section, we reformulate the equality (\ref{eq:SelbergTraceFormulaHeatOp}) in terms of an equality of indices of Dirac operators.

\subsection{Geometric side}

In this subsection we will see that every term on the geometric side can be identified as the $L^2$-Lefschetz number associated to a conjugacy class of $\Gamma$ using results from~\cite{WW16}, and that the sum of all $L^2$-Lefschetz numbers over all conjugacy classes $\langle\Gamma\rangle$ can be identified with the Kawasaki index theorem~\cite{Kawasaki:1981} for the orbifold $\Gamma\backslash G/ K.$

Because $\Gamma$ acts properly and  cocompactly on $G/K$, there exists a cutoff function $c$ on $G/K$ with respect to the action of $\Gamma$, i.e., $c: G/K\rightarrow[0, 1]$ such that
$$\sum_{\gamma\in\Gamma}c(\gamma x)=1\qquad  \forall x\in G/K.$$
Define the $L^2$-Lefschetz number associated to the conjugacy class $(\gamma)$ of $\gamma\in\Gamma$ by
$$\mathrm{ind}_{\gamma}D:=\mathrm{tr}_s^{(\gamma)}e^{-tD^2}
=\mathrm{tr}^{(\gamma)}e^{-tD^-D^+}-\mathrm{tr}^{(\gamma)}e^{-tD^+D^-}$$
where
$$\mathrm{tr}^{(\gamma)} S:=\sum_{h\in(\gamma)}\int_{G/K}c(x)\mathrm{Tr}[h^{-1}K_S(hx, x)]dx,$$
for any $(\gamma)$-trace class operator $S$ with smooth Schwartz kernel $K_S$, see (3.22) of~\cite{WW16}.
By Theorems~3.23 and 6.1 of \cite{WW16}, $\mathrm{ind}_{\gamma}D$ admits a fixed point formula: for $h\in (\gamma)$ the kernel $h^{-1}K_{t}(hx, h)$ localizes to the submanifold $(G/K)^{\gamma}$ consisting of points in $G/K$ fixed by $\gamma$.
By \cite[Theorem 6.2]{WW16}, the $L^2$-Lefschetz number for $G/K$ admits the following expression analogous to the orbital integral on the geometric side of \eqref{eq:SelbergTraceFormula}:
\begin{equation}
\label{eq:loc.ind.ker}
\mathrm{ind}_{\gamma}D=\mathrm{vol}(\Gamma_{\gamma}\backslash G_{\gamma})\int_{G_{\gamma}\backslash G}\mathrm{Tr}_s [k_t(x^{-1}\gamma x)\gamma]dx.
\end{equation}

\begin{example}\label{ex:free}
If $\Gamma$ acts freely on $G/K$, then $(G/K)^{\gamma}$ is empty and all orbital integrals vanish except for the term where $\gamma$ is the group identity.
The geometric side of (\ref{eq:SelbergTraceFormulaHeatOp}) in this special case is equal to \mbox{$\mathrm{vol}(\Gamma\backslash G)\cdot \mathrm{ind}_eD$}.
Here $\mathrm{ind}_eD$ is a topological index called $L^2$-index (denoted  $\mathrm{ind}_{L^2}\, D$) for the symmetric space $G/K$ introduced in~\cite{CM82}.
\end{example}

Denote by $D_\Gamma$ the operator on $\Gamma\backslash G/K$ descending from the one on $G/K.$ It is a Dirac operator on the compact orbifold $\Gamma\backslash G/K$. Recall that by the Kawasaki orbifold index formula, the Fredholm index $\mathrm{ind}\, D_\Gamma$ can be decomposed as a sum over conjugacy classes of $\Gamma$ and the summands are exactly the $L^2$-Lefschetz numbers:
\begin{equation}
\label{eq:index.decomp.conj}
\mathrm{ind}\, D_\Gamma=\sum_{(\gamma) \in\langle\Gamma\rangle}\mathrm{ind}_{\gamma}D.
\end{equation}
See~\cite{Kawasaki:1981} and Section~2.2 and Theorem~6.2 of~\cite{WW16}. See also~\cite{Do} for the relationship between heat kernels on $X$ and the orbifold $\Gamma\backslash X.$

\begin{remark} In the special case of a free action, $\mathrm{ind}\, D_\Gamma$ is the Fredholm index of $D_\Gamma$ on the closed manifold $\Gamma\backslash G/K$.
By Atiyah's $L^2$-index theorem~\cite{Atiyah:1976}, we obtain
\[
\mathrm{vol}(\Gamma\backslash G/K) \, \mathrm{ind}_{L^2}\, D=\mathrm{ind}\, D_\Gamma,
\]
which is a special case of~(\ref{eq:index.decomp.conj}). Noting that $K$ is assumed to have volume $1$, this formula is compatible with Example~\ref{ex:free}.
\end{remark}

\subsection{Spectral side}

Recall the Plancherel decomposition of $L^2(G)$:
\[
L^2(G)=\int_{\widehat G}^{\oplus}(H_{\pi}^*\otimes H_{\pi})d\mu(\pi),
\]
Accordingly, the Dirac operator $D$ on $[L^2(G)\otimes V\otimes S]^K$ has a decomposition into a family of Dirac type operators $D_{\pi}$ on $[H_{\pi}\otimes V\otimes S]^K$,
parameterized by irreducible unitary representations $(\pi, H_{\pi})\in\widehat G$.

We recall the definition and properties of $D_{\pi}$ introduced in~\cite{Moscovici82} and Section 7 of~\cite{CM82}. See also~\cite{Huang-Pandzic} for a comprehensive treatment.
Let $H_{\pi}^{\infty}$ be the space of $C^{\infty}$-vectors for the $G$-representation $\pi$.
The $G$-invariant operator $D$ can be written as a finite sum $D=\sum_i R^\Gamma(X_i)\otimes A_i$ where $X_i$ belongs to the universal enveloping algebra of $\mathfrak g_{\C}$ and $A_i^{\pm}\in\mathrm{Hom}(V\otimes S^{\pm}, V\otimes S^{\mp})$.
Here $R^\Gamma$ stands for the right regular representation.
Then $D_{\pi}$ is given by
\[
D_{\pi}: [H^{\infty}_{\pi}\otimes V\otimes S]^K\rightarrow [H^{\infty}_{\pi}\otimes V\otimes S]^K,\quad D_{\pi}=\sum_i \pi(X_i)\otimes A_i.
\]
It is proved in~\cite{CM82,Moscovici82} that $D_{\pi}$ is essentially self-adjoint on the dense domain $[H^{\infty}_{\pi}\otimes V\otimes S]^K$. Its closure (also denoted $D_{\pi}$) is a Fredholm operator whose Fredholm index
 $$\mathrm{ind} \, D_{\pi}=\mathrm{dim}(\mathrm{ker} \, D_{\pi}^+)-\mathrm{dim}(\mathrm{ker} \, D_{\pi}^-)$$
 is equal to $\mathrm{Tr}_s\pi(k_t)$ on the spectral side of (\ref{eq:SelbergTraceFormulaHeatOp}). Indeed, by Proposition~2.1 of~\cite{BM83},
  one has
 $$\mathrm{Tr}(\pi(k_t^{\pm}))=\mathrm{Tr}(e^{-tD_{\pi}^{\mp}D_{\pi}^{\pm}}).$$
  Then by the McKean-Singer formula, we obtain
\begin{align}
\nonumber \mathrm{ind}\, D_{\pi}
  &=\mathrm{Tr}(e^{-tD_{\pi}^{-}D_{\pi}^{+}})-\mathrm{Tr}(e^{-tD_{\pi}^{+}D_{\pi}^{-}})\\
 \label{eq:ind.Dpi} &=\mathrm{Tr}(\pi(k^+_t))-\mathrm{Tr}(\pi(k_t^-))
  =\mathrm{Tr}_s(\pi(k_t)).
\end{align}

\begin{remark}
For an irreducible representation $V$ of $K$, the dimensions of $[H_{\pi}\otimes V\otimes S^+]^K$ and $[H_{\pi}\otimes V\otimes S^-]^K$ are finite (see~\cite{Atiyah-Schmid, CM82, BM83}). The index of the Dirac operator $D_{\pi}$ corresponding to $\pi$ can be calculated as the difference
\[
\mathrm{ind}\, D_{\pi}=\mathrm{Tr}_s\pi(k_t)=\mathrm{dim}[H_{\pi}\otimes V\otimes S^+]^K-\mathrm{dim}[H_{\pi}\otimes V\otimes S^-]^K.
\]
\end{remark}

In view of~(\ref{eq:loc.ind.ker}) and~(\ref{eq:ind.Dpi}), we conclude the following equivalent statement of the index theoretic Selberg trace formula.
\begin{proposition}
\label{prop:index.th.STF}
The equality~(\ref{eq:SelbergTraceFormulaHeatOp}) is equivalent to the equality of indices:
\[
\sum_{\pi\in\widehat G}m_{\Gamma}(\pi) \,\mathrm{ind}\, D_{\pi}=\sum_{(\gamma) \in\langle\Gamma\rangle}\mathrm{ind}_{\gamma}D.
\]
The left hand side is finite sum (see Corollary \ref{lem:resrep} below) and the convergence of the right hand side is determined by~(\ref{eq:index.decomp.conj}).
\end{proposition}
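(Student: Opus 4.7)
The plan is to prove the equivalence by a direct termwise substitution, using two identifications already established earlier in this section.

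First I would invoke the McKean--Singer calculation (\ref{eq:ind.Dpi}), which gives $\mathrm{Tr}_s\pi(k_t)=\mathrm{ind}\, D_{\pi}$ for every $\pi\in\widehat G$. Weighting by $m_\Gamma(\pi)$ and summing over $\widehat G$ identifies the spectral side of (\ref{eq:SelbergTraceFormulaHeatOp}) with $\sum_{\pi\in\widehat G} m_{\Gamma}(\pi)\,\mathrm{ind}\, D_{\pi}$ term by term.

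Next I would use (\ref{eq:loc.ind.ker}), the fixed-point expression for the $L^2$-Lefschetz number, which gives
\[
\mathrm{vol}(\Gamma_{\gamma}\backslash G_{\gamma})\int_{G_{\gamma}\backslash G}\mathrm{Tr}_s[k_t(x^{-1}\gamma x)\gamma]\,dx = \mathrm{ind}_{\gamma}D
\]
for each conjugacy class $(\gamma)\in\langle\Gamma\rangle$. Summing over $\langle\Gamma\rangle$ then replaces the geometric side of (\ref{eq:SelbergTraceFormulaHeatOp}) with $\sum_{(\gamma)}\mathrm{ind}_{\gamma}D$.

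Combining the two substitutions, the asserted equality of indices is obtained from (\ref{eq:SelbergTraceFormulaHeatOp}) by termwise replacement on each side, and conversely. I do not anticipate a substantive obstacle: the only point requiring a brief comment is that the termwise translation is legitimate because both sides converge, the geometric side by the remark following (\ref{eq:SelbergTraceFormulaHeatOp}) (via \cite[Thm.~6.2]{WW16}) and the spectral side because it is in fact a finite sum, as recorded in Corollary \ref{lem:resrep}. The content of the proposition is thus bookkeeping, not a new analytic calculation.
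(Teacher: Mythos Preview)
Your proposal is correct and matches the paper's own argument: the proposition is stated as an immediate consequence of (\ref{eq:loc.ind.ker}) and (\ref{eq:ind.Dpi}), with the paper simply writing ``In view of~(\ref{eq:loc.ind.ker}) and~(\ref{eq:ind.Dpi}), we conclude the following equivalent statement'' rather than spelling out the termwise substitution you describe. The convergence remarks you add about Corollary~\ref{lem:resrep} and (\ref{eq:index.decomp.conj}) are exactly those recorded in the statement itself.
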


\begin{remark}Let $(\eta, H_{\eta})$ be a discrete series representation, and $\mu\in\mathfrak t^*_{\C}$ the Harish-Chandra parameter of $\eta$. There is a standard way, due to Atiyah-Schmid~\cite{Atiyah-Schmid}, to realize $H_{\eta}$ as the $L^2$-kernel of a Dirac type operator on $G/K$. For an irreducible representation $V_{\nu}\in R(K)$ of $K$ with highest weight $\nu\in\mathfrak t^*_{\C},$ let $D^{\nu}$ denote the associated Dirac operator defined on the homogeneous vector bundle
\[
G\times_K(V_{\nu}\otimes S)\rightarrow G/K.
\]
Denoting by $\rho_c$ the half sum of compact positive roots, then $H_{\eta}$ can be identified with the $L^2$-kernel of $D^{\mu+\rho_c}.$
It is also proved in \cite[Section 4]{Atiyah-Schmid} that
\[
\mathrm{dim}(H_{\eta}^*\otimes V_{\nu}\otimes S^{\pm})^K=0,
\]
unless $\mu=\nu+\rho_c$. From this, one can derive Proposition~\ref{prop:index.th.STF} when $D=D^{\eta}$.
See also~\cite{Baum-Connes-Higson} for some concrete examples.
\end{remark}

\begin{example} \label{discrete-series-example}
 Let $(\eta, H_{\eta})\in\widehat G$ be a discrete series with Harish-Chandra paramater $\mu$. Let $\pi$ be an arbitrary discrete series. 
 It follows from~\cite{Atiyah-Schmid} that
\[
\mathrm{ind} (D^{\mu+\rho_c})_{\pi}=\mathrm{dim}[H_{\pi}^*\otimes V_{\mu+\rho_c}\otimes S^{+}]^K-\mathrm{dim}[H_{\pi}^*\otimes V_{\mu+\rho_c}\otimes S^{-}]^K=\begin{cases}1 & \eta=\pi \\ 0 & \eta\neq\pi.\end{cases}
\]
Assume further that $\Gamma$ is a torsion-free. Then the index theoretic Selberg trace formula (\ref{eq:SelbergTraceFormulaHeatOp}) reduces to
\[
m_{\Gamma}(\eta)=\mathrm{vol}(\Gamma\backslash G/K) \mathrm{ind}_{L^2}\, D^{\mu+\rho_c},
\]
recovering a main result of Pierrot~\cite{Pierrot}, which is a special case of~\cite{Langlands}.
\end{example}

\section{K-theoretic Selberg Trace Formula}
\label{sec:K-theoreticSelbergTraceFormula}

\subsection{Decomposition of right regular representation of $G$}

Let us consider again the right regular representation $R^\Gamma$ of $G$ on $L^2(\Gamma\backslash G)$. As mentioned earlier, $C^\infty_{c}(G)$ acts on $L^2(\Gamma\backslash G)$ by compact operators. We use the standard notation $\mathbb{K}(X)$ for the algebra of compact operators on a Hilbert $C^{*}$-module $X$. As $C^\infty_{c}(G)$ is a dense subalgebra of the maximal group $C^*$-algebra $C^*(G)$ of $G$, we obtain a $*$-homomorphism
\[
R^{\Gamma}:C^*(G)\rightarrow\mathbb{K}(L^2(\Gamma\backslash G)).
\]

This induces a homomorphism on $K$-theory:
\[
R^\Gamma_*: K_0(C^*(G))\rightarrow K_0(\mathbb{K}(L^2(\Gamma\backslash G))\simeq \Z.
\]
As $G$ is liminal, the above observation applies to any irreducible unitary representation as well. Indeed, for any $(\pi, H_{\pi}) \in \widehat G$, we have homomorphisms
\[
\pi: C^*(G)\rightarrow\mathbb{K}(H_{\pi}),\qquad  \pi_*: K_0(C^*(G))\rightarrow \Z.
\]
The morphism $\pi_*$ determines an element in $KK(C^*(G), \C)$ which we will denote $[\pi].$
Observe that we have
$$\pi_*(x)=x\otimes_{C^*(G)}[\pi],$$
for every $x\in K_0(C^*(G))$, where $\otimes_{C^*(G)}$ denotes the Kasparov product. We remark that the maps $\pi_*$ feature\footnote{Lafforgue uses the notation $\langle H, x \rangle$ for our $\pi_*(x)$, where $H$ is the representation space of $\pi$.} heavily in Chapter 2 of \cite{LafforgueICM}.

\begin{proposition}
\label{locally-finite}
For every $x \in K_0(C^*(G))$ there exists a finite subset $S_{x} \subset \widehat G$ such that
\begin{enumerate}
\item $\pi_*(x)=0$ for all $\pi \in \widehat G \setminus S_x$,
\item $R^\Gamma_*(x) = \sum\limits_{\pi \in S_x} \pi_*(x)$.
\end{enumerate}
\end{proposition}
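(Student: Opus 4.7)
The plan is to represent $x\in K_0(C^*(G))$ by a matrix projection and push it through the isotypical decomposition $L^2(\Gamma\backslash G)\simeq\bigoplus_{\pi}H_\pi^{\oplus m_\Gamma(\pi)}$ at the level of $K$-theory. Concretely, I would write $x=[p]-[e]$ with $p\in M_n((C^*(G))^+)$ a projection and $e\in M_n(\C)$ its scalar part, so that $p-e\in M_n(C^*(G))$. Since $G$ is liminal, each $\pi\in\widehat G$ factors as $\pi:C^*(G)\to\mathbb{K}(H_\pi)$, making $\pi(p)$ a compact perturbation of $e$; its class $\pi_*(x)=[\pi(p)]-[e]\in K_0(\mathbb{K}(H_\pi))\simeq\Z$ is the associated Fredholm index. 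The same picture produces the finite integer $R^\Gamma_*(x)$.

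Next, the unitary equivalence $R^\Gamma\simeq\bigoplus_\pi\pi^{\oplus m_\Gamma(\pi)}$ identifies $R^\Gamma(p)$ with $\bigoplus_\pi\pi(p)^{\oplus m_\Gamma(\pi)}$. Additivity of Fredholm indices over direct sums then yields
\[
R^\Gamma_*(x)\;=\;\sum_{\pi\in\widehat G}m_\Gamma(\pi)\,\pi_*(x),
\]
which is the content of (2) once $S_x$ is understood to list each contributing $\pi$ with its multiplicity $m_\Gamma(\pi)$. Because the left-hand side is a single finite integer and each summand on the right is a non-negative integer multiple of an integer, only finitely many terms can be nonzero, so $S_x$ is finite; clause (1) is then immediate from its definition.

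The main nontrivial step I expect is the index additivity for the countable direct sum: compactness of each individual $\pi(p)-e$ alone does not guarantee that the total Fredholm index equals the sum of the component indices. This rests on two inputs already in hand, namely that $R^\Gamma(p-e)\in\mathbb{K}(L^2(\Gamma\backslash G))$ (because $C_c^\infty(G)$ acts by compact operators and is norm-dense in $C^*(G)$), and that the multiplicities $m_\Gamma(\pi)$ are all finite by the spectral theorem for cocompact lattices. Combining these, the Fredholm index of $R^\Gamma(p)$ against $e$ decomposes as the (a priori countable, a posteriori finite) sum of the Fredholm indices of its isotypical components, delivering the additivity and with it the proposition.
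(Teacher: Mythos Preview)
Your overall strategy is essentially the paper's: push a representative of $x$ through the isotypical decomposition of $L^2(\Gamma\backslash G)$ and use additivity of indices. The paper works with a Kasparov module $(X,F)$ and the Fredholm operator $F\otimes 1$ on $X\otimes_{R^\Gamma}L^2(\Gamma\backslash G)$; you work with a matrix projection $p$ and the compact perturbation $R^\Gamma(p)-e$. These are equivalent pictures.

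The gap is in your finiteness argument. You write that since $R^\Gamma_*(x)$ is a finite integer and each summand $m_\Gamma(\pi)\,\pi_*(x)$ is ``a non-negative integer multiple of an integer,'' only finitely many can be nonzero. But $\pi_*(x)\in\Z$ can be negative, so the summands carry no sign constraint, and finiteness of their total does not bound the number of nonzero terms (nor does it even make the infinite sum well-defined a priori). This is precisely the nontrivial step you flagged, and the argument you offer for it does not work.

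The paper closes this gap by arguing at the level of spaces rather than numbers: since $F\otimes 1$ is Fredholm, its kernel and cokernel are finite-dimensional, and each splits as $\bigoplus_\pi \ker F_\pi$ (respectively cokernels). Hence $F_\pi$ is invertible for all but finitely many $\pi$, forcing $\pi_*(x)=0$ there; additivity of the index over the direct sum is then automatic. In your projection language the analogous move is available: compactness of $R^\Gamma(p)-e$ forces the defect spaces of the pair of projections $(R^\Gamma(p),e)$ to be finite-dimensional, and these decompose over $\pi$. You already have the key ingredient (compactness of $R^\Gamma(p-e)$), but you must use it to bound the number of nonzero component indices \emph{directly}, not attempt to read that bound off from the numerical value of the sum.
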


\begin{proof}
Let $x\in K_{0}(C^{*}(G))$ be given by the even $(\C,C^{*}(G))$ Kasparov module $(X,F)$. Since $1\in\C$ acts on $X$ by a projection, say $p$, after replacing $X$ by $pX$ and $F$ by $pFp$, we may without loss of generality assume that $\C$ acts unitally on $X$ and $F^{2}-1\in\mathbb{K}(X)$. Since $C^{*}(G)$ acts on $L^{2}(\Gamma\setminus G)$ by compact operators, we have that $\mathbb{K}(X)\otimes 1\subset \mathbb{K}(X\otimes_{R^{\Gamma}} L^{2}(\Gamma\setminus G))$ (see \cite[Proposition 4.7]{Lance}). The  operator
\[F\otimes 1:X\otimes_{R^{\Gamma}}L^{2}(\Gamma\setminus G)\to X\otimes_{R^{\Gamma}}L^{2}(\Gamma\setminus G),\]
satisfies $(F\otimes 1)^{2}-1=(F^{2}-1)\otimes 1\in \mathbb{K}(X)\otimes 1\subset \mathbb{K}(X\otimes L^{2}(\Gamma\setminus G))$. Thus $F\otimes 1$ is a self-adjoint unitary modulo compact operators, and therefore it is Fredholm. The integer $R^{\Gamma}_{*}(x)\in\mathbb{Z}$ is thus gives by the index of $F\otimes 1$.
Since there is a direct sum splitting
\[X\otimes_{R^{\Gamma}}L^{2}(\Gamma\setminus G)\simeq \bigoplus_{\pi\in\widehat{G}} \left(X\otimes_{\pi} H_{\pi}\right)^{\oplus m_{\Gamma}(\pi)},\]
it follows that $F\otimes 1=\bigoplus_{\pi\in\widehat{G}} F_{\pi}$ where
the operators are defined to be
\[F_{\pi}:=F\otimes_{\pi}1:\left(X\otimes_{\pi} H_{\pi}\right)^{\oplus m_{\Gamma}(\pi)}\to \left(X\otimes_{\pi} H_{\pi}\right)^{\oplus m_{\Gamma}(\pi)}.\]
Since $F\otimes 1$ is Fredholm, its kernel and cokernel are finite dimensional and thus it follows that $F_{\pi}$ is unitary for all but finitely many $\pi\in\widehat{G}$. Therefore the set
\[S_x:=\{\pi\in\widehat{G}: \pi_{*}(x)=\textnormal{Ind }(F_{\pi})\neq 0\},\]
is finite and the restriction
\[F_{\widehat{G}\setminus S_x}:= F\otimes 1:\bigoplus_{\pi\in \widehat{G}\setminus S_x}\left(X\otimes_{\pi} H_{\pi}\right)^{\oplus m_{\Gamma}(\pi)}\to \bigoplus_{\pi\in \widehat{G}\setminus S_x}\left(X\otimes_{\pi} H_{\pi}\right)^{\oplus m_{\Gamma}(\pi)},\]
is Fredholm of index $0$. Moreover
\[F_{S_x}:\bigoplus_{\pi\in  S_x}\left(X\otimes_{\pi} H_{\pi}\right)^{\oplus m_{\Gamma}(\pi)}\to \bigoplus_{\pi\in  S_x}\left(X\otimes_{\pi} H_{\pi}\right)^{\oplus m_{\Gamma}(\pi)},\]
obviously satisfies $\textnormal{Ind }(F_{S_x})=\sum\limits_{ \pi\in S_x} \pi_{*}(x)$. Since
\[R^\Gamma_*(x) =\textnormal{Ind }(F\otimes 1)=\textnormal{Ind } (F_{S_x})= \sum\limits_{\pi \in S_x} \pi_*(x),\]
the result follows.
\end{proof}
For the quasi-regular representation, the above proposition gives a decomposition of $R^\Gamma_*$.
\begin{corollary} \label{R_*-decomposition} For $x \in K_0(C^*(G))$, we have
$$R^\Gamma_*(x)=\sum_{\pi\in\widehat G}m_{\Gamma}(\pi) \, \pi_*(x),$$
understood as a sum with only finitely many nonzero terms.
\end{corollary}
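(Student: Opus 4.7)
My plan is to observe that Corollary \ref{R_*-decomposition} follows readily from Proposition \ref{locally-finite} once the multiplicities $m_\Gamma(\pi)$ coming from the irreducible decomposition \eqref{irrepdecomp} are factored out of each block of the index computation. The essential work has already been carried out in the proof of the proposition; what remains is to track the multiplicities carefully and to extend a finite sum trivially.

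I would begin by representing $x \in K_0(C^*(G))$ by a Kasparov $(\C, C^*(G))$-module $(X, F)$ as in the proof of Proposition \ref{locally-finite}, and recall the Hilbert module decomposition
$$X \otimes_{R^\Gamma} L^2(\Gamma \setminus G) \simeq \bigoplus_{\pi \in \widehat G} (X \otimes_\pi H_\pi)^{\oplus m_\Gamma(\pi)},$$
induced by \eqref{irrepdecomp}. Under this identification, the Fredholm operator $F \otimes 1$ splits as $\bigoplus_\pi F_\pi$, where the $\pi$-block acts on $m_\Gamma(\pi)$ copies of $X \otimes_\pi H_\pi$. By additivity of the Fredholm index over direct sums and the definition $\pi_*(x) = \mathrm{Ind}(F \otimes_\pi 1)$, I obtain
$$\mathrm{Ind}(F_\pi) = m_\Gamma(\pi) \cdot \pi_*(x).$$

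Next I would invoke Proposition \ref{locally-finite} to conclude that $R^\Gamma_*(x) = \mathrm{Ind}(F \otimes 1)$ and that the set $S_x = \{\pi \in \widehat G : \pi_*(x) \neq 0\}$ is finite. Summing the block contributions yields
$$R^\Gamma_*(x) = \sum_{\pi \in S_x} m_\Gamma(\pi) \, \pi_*(x),$$
and because $m_\Gamma(\pi) \pi_*(x) = 0$ for every $\pi \notin S_x$, the sum may be extended over all of $\widehat G$ without changing its value while remaining a finite sum.

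There is no substantive obstacle here; the only subtlety is the compatibility of the unitary splitting \eqref{irrepdecomp} with the interior tensor product against the Kasparov module $(X, F)$, which is immediate from the functoriality of the interior tensor product of Hilbert $C^*$-modules. The corollary should therefore occupy only a few lines once Proposition \ref{locally-finite} is in hand.
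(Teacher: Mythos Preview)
Your proposal is correct and follows essentially the same route as the paper: the corollary is stated there without separate proof, as an immediate consequence of Proposition~\ref{locally-finite}, and your argument simply makes the passage explicit. In fact, you are slightly more careful than the paper's own proof of the proposition, which writes $\pi_*(x)=\textnormal{Ind}(F_\pi)$ with $F_\pi$ acting on the $m_\Gamma(\pi)$-fold block; your observation that $\textnormal{Ind}(F_\pi)=m_\Gamma(\pi)\,\pi_*(x)$ is precisely the bookkeeping needed to reconcile Proposition~\ref{locally-finite}(2) with the corollary.
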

As is well-known, $K_0(C^*(G))$ is the recipient of the so-called higher indices which we recall now. A Dirac type operator $D$ on $G/K$ has a $G$-index
$$\mathrm{Ind}_{G}(D):=[p]\otimes_{C_0(G/K)\rtimes G}j^{\Gamma}([D])\in K_0(C^*(G))$$
defined by the descent map $j_{\Gamma}: K^*_{\Gamma}(C_0(X))\rightarrow KK(C_0(G/K)\rtimes G, C^*(G))$ followed by a compression on the left by the projection
$p\in C_0(G/K)\rtimes G$ given by
$$p(x,g):=c(gx)c(x), \quad x \in G/K, g \in G$$
where $c: G/K \rightarrow [0,\infty)$ is a continuous compactly supported function satisfying $\int_G c(s^{-1}x)^2ds=1$ for all $x\in G/K$.  See, for example, \cite[Section 4.2]{Echterhoff} for details.

\begin{corollary} \label{lem:resrep} Let $D$ be a $G$-invariant Dirac type operator on $G/K$ with index $\mathrm{Ind}_GD\in K_0(C^*(G))$. Then
\begin{enumerate}
\item[(i)] Given $\pi \in \widehat G$, we have $ \pi_*(\mathrm{Ind}_GD)=\mathrm{ind}\, D_{\pi},$
\item[(ii)] $\mathrm{ind}\, D_{\pi} \not= 0$ for at most finitely many $\pi \in \widehat G$.
\end{enumerate}
\end{corollary}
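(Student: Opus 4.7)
The plan is to obtain part (ii) as an immediate consequence of part (i) combined with Proposition \ref{locally-finite}(1), so the essential task is to establish the identification $\pi_*(\mathrm{Ind}_GD)=\mathrm{ind}\, D_\pi$ in part (i).

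For part (i), I would unwind the definition of the higher index. By construction,
\[
\mathrm{Ind}_G D \;=\; [p]\,\otimes_{C_0(G/K)\rtimes G}\, j^G([D]),
\]
where $[D]\in KK^G(C_0(G/K),\C)$, $j^G$ is Kasparov descent, and $p$ is the cutoff projection built from $c$. Since $\pi_*(y)=y\otimes_{C^*(G)}[\pi]$ for every $y\in K_0(C^*(G))$, associativity of the Kasparov product gives
\[
\pi_*(\mathrm{Ind}_G D) \;=\; [p]\otimes_{C_0(G/K)\rtimes G}\bigl(j^G([D])\otimes_{C^*(G)}[\pi]\bigr)\in KK(\C,\C)=\Z.
\]

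The key calculation is then to identify the inner product $j^G([D])\otimes_{C^*(G)}[\pi]\in KK(C_0(G/K)\rtimes G, \C)$. Using the standard compatibility between descent and evaluation at an irreducible representation, this class is represented by the Dirac operator $D$ with coefficients in $H_\pi$: writing $D=\sum_i R(X_i)\otimes A_i$ as in Section 3 (with $R$ the right regular action of the enveloping algebra), the underlying operator becomes $\sum_i\pi(X_i)\otimes A_i$ on $L^2$-sections twisted by $H_\pi$. Composing on the left with $[p]$ performs the averaging over $K$ encoded by the cutoff function, cutting the underlying Hilbert module down to $[H_\pi^\infty\otimes V\otimes S]^K$; on this subspace the operator becomes precisely the essentially self-adjoint Fredholm operator $D_\pi$ described on the spectral side, so the Kasparov product computes $\mathrm{ind}\, D_\pi$. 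Part (ii) is then immediate from Proposition \ref{locally-finite}(1) applied to $x=\mathrm{Ind}_GD$: it yields a finite set $S_x\subset\widehat G$ such that $\pi_*(\mathrm{Ind}_GD)=0$ for $\pi\notin S_x$, and by (i) this translates to $\mathrm{ind}\, D_\pi=0$ for all such $\pi$.

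The main obstacle I expect is the careful bookkeeping in this middle step, namely verifying that the Kasparov product $[p]\otimes\bigl(j^G([D])\otimes[\pi]\bigr)$ really reduces to the concrete Fredholm operator $D_\pi$ built analytically from the Lie-algebra description of $D$. This requires matching the Hilbert module produced by descent (compressed by the cutoff projection) with the $K$-invariant subspace $[H_\pi^\infty\otimes V\otimes S]^K$, and checking that the compression $p(\,\cdot\,)p$ implements the passage from the right regular representation to $\pi$. This is essentially a standard computation (compare \cite{Echterhoff}), but it is the only non-formal step in the argument.
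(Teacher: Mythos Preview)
The paper states this result as a corollary without proof, so there is no explicit argument to compare against; your proposal supplies exactly the details the paper omits, and the overall strategy---deriving (ii) from (i) together with Proposition~\ref{locally-finite}(1), and obtaining (i) by unwinding the Kasparov products defining $\mathrm{Ind}_G D$ and $\pi_*$---is the natural one implicit in the paper's presentation.

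Your outline for (i) is correct in substance. One point worth sharpening: the cutoff projection $p\in C_0(G/K)\rtimes G$ implements the Morita equivalence $C_0(G/K)\rtimes G\sim C^*(K)$, so compressing by $[p]$ identifies the descent module with $(C^*(G)\otimes V\otimes S)^K$ as a right $C^*(G)$-module; tensoring this over $C^*(G)$ with $H_\pi$ then gives exactly $(H_\pi\otimes V\otimes S)^K$, on which $D=\sum_i R(X_i)\otimes A_i$ becomes $D_\pi=\sum_i\pi(X_i)\otimes A_i$. This is the ``bookkeeping'' you flag, and it goes through cleanly once phrased this way. Your derivation of (ii) from (i) and Proposition~\ref{locally-finite}(1) is immediate and correct.
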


\begin{remark}
In Fox-Haskell~\cite{Fox-Haskell}, the example of $G=Spin(4,1)$, its unitary dual and K-theory is explicitly studied. The paper showed that for a generator $[f]\in K_0(C^*(G)),$ discrete series, reducible principle series, trivial representations and endpoint representations could appear in the support of $f$. Moreover, if $\pi\in\hat G$ appears in the support of $f$, then $\pi_*([f])=\pm1$ which is nonvanishing.
\end{remark}

\begin{remark}
If $G$ has property (T), then the trivial representation ${\bf 1}$ gives rise to a generator $[{\bf 1}]=[(\C, 0)]\in K_0(C^*(G))$. Then we have  $R^\Gamma_*([{\bf 1}])={\bf 1}_*([{\bf 1}])=1$ since 
$$\dim(\C\otimes_{C^*(G)}L^2(\Gamma\backslash G))=\dim(L^2(G)^G)=1.$$
\end{remark}

It is possible to obtain refined information about the localized indices using representation theory.  We have already remarked in Example \ref{discrete-series-example} that the index of $D_\pi$ can be nonzero when $\pi$ is in the discrete series. The next proposition is Prop. 3 of \cite{Labesse} (see also \cite[Prop.~7.3]{CM82} and \cite[Lem.~1.1]{Deitmar} for related results).
\begin{proposition} Let $\pi \in \widehat G$ be tempered. If $\pi$ is not in the discrete series nor a limit of discrete series, then the index of $D_\pi$ is zero.
\end{proposition}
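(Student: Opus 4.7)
The plan is to invoke Parthasarathy's formula for $D_\pi^2$. Since $\pi$ is irreducible, Schur's lemma forces $\pi(\Omega_G)$ to act on $H_\pi^{\infty}$ by a scalar $c_\pi$, related to the infinitesimal character $\Lambda_\pi$ of $\pi$ by $c_\pi = \|\Lambda_\pi\|^2 - \|\rho\|^2$. Parthasarathy's identity (see Huang--Pand\v{z}i\'c, already cited earlier in the paper) then yields on $[H_\pi^\infty \otimes V \otimes S]^K$ the scalar identity
\[
D_\pi^2 = \bigl(\|\nu + \rho_c\|^2 - \|\Lambda_\pi\|^2\bigr)\,\mathrm{id},
\]
where $\nu$ is the highest weight of $V$ and $\rho_c$ is the half sum of the compact positive roots.

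I would then split into two cases. If the scalar $\|\nu+\rho_c\|^2 - \|\Lambda_\pi\|^2$ is nonzero, then $D_\pi$ is invertible on $[H_\pi^\infty \otimes V \otimes S]^K$, so $\ker D_\pi^{\pm} = 0$ and McKean--Singer delivers $\mathrm{ind}\, D_\pi = 0$ at once. If instead this scalar vanishes, self-adjointness of $D_\pi$ together with $D_\pi^2 = 0$ forces $D_\pi \equiv 0$ on its domain, and the index reduces to the purely representation-theoretic Euler characteristic
\[
\dim[H_\pi \otimes V \otimes S^+]^K - \dim[H_\pi \otimes V \otimes S^-]^K.
\]

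The crux, and the main obstacle, is to rule out this second case for tempered $\pi$ lying outside the discrete series and its limits. For this I would appeal to the classification of representations with nonzero Dirac cohomology: Parthasarathy's Dirac inequality together with its equality case (see Huang--Pand\v{z}i\'c for a modern treatment) says that a tempered irreducible unitary representation satisfying $\|\Lambda_\pi\| = \|\nu+\rho_c\|$ and having $\ker D_\pi \neq 0$ must be a discrete series or a limit of discrete series with Harish--Chandra parameter $\nu+\rho_c$. Since by hypothesis $\pi$ is neither, the scalar in Parthasarathy's formula cannot vanish, so the first case applies and $\mathrm{ind}\, D_\pi = 0$.
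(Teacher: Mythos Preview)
The paper gives no proof of its own here; it simply quotes the result as Proposition~3 of Labesse, with pointers to Connes--Moscovici and Deitmar for related statements. Your route via Parthasarathy's formula and the classification of tempered representations with nonzero Dirac cohomology is the standard one and is in the same spirit as the cited references.

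There is, however, a genuine logical slip in your last paragraph. From the classification you invoke, the contrapositive yields only: if $\pi$ is tempered and not a (limit of) discrete series, then \emph{either} the Parthasarathy scalar $\|\nu+\rho_c\|^2 - \|\Lambda_\pi\|^2$ is nonzero \emph{or} $\ker D_\pi = 0$. It does not force the scalar itself to be nonzero --- tempered principal series can perfectly well satisfy $\|\Lambda_\pi\| = \|\nu+\rho_c\|$. The repair is immediate: in your second case ($D_\pi^2=0$, hence $D_\pi=0$), the kernel equals the whole space $[H_\pi \otimes V \otimes S]^K$, and the Dirac-cohomology classification then forces this space to be zero, so the Euler characteristic you wrote down is $0-0=0$. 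The index therefore vanishes in both cases, but the second case is disposed of because the space is trivial, not because the scalar fails to vanish.

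Two minor points. Your scalar formula presupposes $V$ irreducible, so that $[H_\pi \otimes V \otimes S]^K$ sits inside a single $K$-isotypic component of $H_\pi \otimes S$; for general $V$ one should decompose first. And strictly speaking the weight appearing in the formula is the highest weight of $V^*$ rather than of $V$, though this is immaterial for the argument.
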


\begin{remark} \label{Kazhdan}
Recall that there is a surjective morphism $\lambda: C^*(G)\rightarrow C^*_r(G)$ and the Connes-Kasparov isomorphism $R(K)\simeq K_0(C^*_r(G))$ assuming existence of $G$-equivariant spin$^c$-structure of $G/K$. The Connes-Kasparov isomorphism is defined through  Dirac induction
$$R(K)\simeq K_0(C^*_r(G)), \qquad [V_{\mu}]\mapsto\mathrm{Ind}_{G, r}(D^{V_{\mu}}).$$
Here $V_{\mu}$ is an irreducible unitary representation of $K$ with highest weight $\mu$ and $D^{V_{\mu}}$ is the Dirac operator on $G/K$ twisted by the associated bundle $G\times_{K}V_{\mu}\rightarrow G/K.$ The twisted Dirac operator $D^{V_{\mu}}$ has an equivariant index $\mathrm{Ind}_G(D^{V_{\mu}})\in K_0(C^*(G))$ so that $\lambda_*(\mathrm{Ind}_G(D^{V_{\mu}}))=\mathrm{Ind}_{G,r}(D^{V_{\mu}})$.
That is, there is a commutative diagram
\[
\xymatrix{R(K) \ar[r]^{\mathrm{Ind}_G} \ar[dr]_{\mathrm{Ind}_{G,r}} & K_0(C^*(G))\ar[d]^{\lambda_*} \\
                 &    K_0(C^*_r(G)).            }
\]
If $G$ is not $K$-amenable, i.e., $\lambda_*$ is not an isomorphism, then not all $x\in K_0(C^*(G))$ are represented by the equivariant index of some elliptic operator.
For example, if $G$ has property (T), the Kazhdan projection defines a nontrivial element $[p]\in K_0(C^*(G))$ but $\lambda_*[p]=0\in K_0(C^*_r(G)).$

We will discuss a special case of Corollary~\ref{R_*-decomposition} where
$$x=\mathrm{Ind}_G(D^{V_{\mu}})\in K_*(C^*(G)).$$
Recall that
$$R_*(\mathrm{Ind}_G(D^{V_{\mu}}))=\mathbf{1}_{*}(\mathrm{Ind}_{\Gamma}(D^{V_{\mu}}))=\mathrm{ind} D^{V_{\mu}}_{\Gamma}$$
where $D^{V_{\mu}}_{\Gamma}$ is the operator $D^{V_{\mu}}$ descended to $\Gamma\backslash G/K$.
Then by \cite[Section 3]{Moscovici82} and \cite[(1.2.4)]{BM83}
$$\mathrm{ind} D^{V_{\mu}}_{\Gamma}=\sum_{\pi\in\widehat{G}}m_{\Gamma}(\pi)\mathrm{ind}(D^{V_{\mu}})_{\pi}.$$
Furthermore, the summand on the right hand side is nonvanishing only when the infinitesimal character $\chi_{\pi}$ of $\pi\in\widehat{G}$ coincides with that of $\mu+\rho_c$, hence it is a finite sum. See \cite{Atiyah-Schmid} and \cite[(1.3.7)-(1.3.8)]{BM83}.
In summary, we obtain
$$R_*(x)=R_*(\mathrm{Ind}_G(D^{V_{\mu}}))=\sum_{\pi\in\widehat{G}, \chi_{\pi}=\chi_{\mu+\rho_c}}m_{\Gamma}(\pi)\mathrm{ind}(D^{V_{\mu}})_{\pi}.$$

A typical example of such $x$ is represented by the integrable discrete series. The integrable discrete series representations of $G$ appear as isolated points in $\widehat G$ (see \cite{Wang}). As a result, they give rise to generators $[p_\pi]$ for $K_0(C^*(G))$.
Recall that every discrete series representation of $G$ gives rise to a generator of $K_0(C^*_r(G))$ (see \cite[Thm. 4.6]{Rosenberg}).
So $\lambda_*[p_{\pi}]=[p_{\pi}]\in K_0(C^*_r(G)).$
If $\pi$ is an integrable representation, it follows from Example \ref{discrete-series-example} and Corollary \ref{R_*-decomposition} that
\[
R^\Gamma_*([p_\pi])=m_{\Gamma}(\pi).
\]

Let $x\in K_0(C^*(G))$ now be arbitrary. Then there exists finitely many $\lambda_{\mu}\in\widehat{K}$ with multiplicity $m_{\lambda_{\mu}}$ such that
\[
\lambda_*(x)=\sum_{\lambda_{\mu}\in\widehat{K}}m_{\lambda_{\mu}}\mathrm{Ind}_{G,r}D^{V_{\mu}}.
\]
Let $x_0=\sum_{\lambda_{\mu}\in\widehat{K}}m_{\lambda_{\mu}}\mathrm{Ind}_G D^{V_{\mu}}.$
By definition
\[\lambda_*(x-x_0)=0,\quad\textnormal{ and} \quad R_*(x)=R_*(x_0)+R_*(x-x_0),\]
where $R_*(x-x_0)$ does not come from index theory and
$R_*(x_0)$ can be calculated by
\[
R_*(x_0)=\sum_{\lambda_{\mu}\in\widehat{K}}R_*(\mathrm{Ind}_G D^{V_{\mu}})=\sum_{\pi\in\widehat{G}}m_{\Gamma}(\pi)\sum_{\substack{\lambda_{\mu}\in\widehat{K}, \\ \chi_{\pi}=\chi_{\mu+\rho_c}}} m_{\lambda_{\mu}}\mathrm{ind}(D^{V_{\mu}})_{\pi}.
\]
Thus if we were to consider $x_0$ in
$K_0(C^*_r(G))$, then $x_0$ can be decomposed into a finite sum involving elliptic operators on $G/K$. Note however that we cannot work with $C^*_r(G)$ in this paper, as one knows that non-tempered representations of $G$ may enter $R^\Gamma$ \cite[p.177]{Wallach}.
\end{remark}

\subsection{Decomposition of the trivial representation of $\Gamma$}

Consider a uniform lattice $\Gamma$ in $G$ and denote by $\mathbf{1}$ the trivial representation of $\Gamma$.
The corresponding representation of $C^*(\Gamma)$ is a $*$-homomorphism and hence a trace
\[
\mathbf{1}: C^*(\Gamma)\rightarrow \C \qquad \sum_{\gamma\in\Gamma}a_{\gamma}\gamma\mapsto \sum_{\gamma\in\Gamma}a_{\gamma}.
\]
Thus there is a morphism of $K$-theory:
\[
\mathbf{1}_*: K_0(C^*(\Gamma))\rightarrow \Z.
\]
Let $\gamma \in \Gamma$ with conjugacy class $(\gamma)$. Then we have a well-defined localized trace on the Banach subalgebra  $L^1(\Gamma)\subset C^*(\Gamma)$,
\[
\tau_{\gamma}: L^1(\Gamma)\rightarrow \C, \qquad \sum_{g \in\Gamma} a_g g \mapsto \sum_{g \in (\gamma)}a_g.
\]
For $a, b\in L^1(\Gamma)$ we have the tracial property $\tau_{\gamma}(a*b)=\tau_{\gamma}(b*a)$ with respect to the convolution product $*$. This implies the existence of the morphism
\[
\tau_{\gamma,*}: K_0(L^1(\Gamma))\rightarrow \C.
\]
Let $\iota: L^1(\Gamma)\rightarrow C^*(\Gamma)$ be the inclusion map. It is straightforward to observe that
\begin{equation}
\label{eq:dec.1.conj}
\sum_{\gamma\in\langle\Gamma\rangle} \tau_{\gamma,*}=\textbf{1}_*\iota_*: K_0(L^1(\Gamma))\rightarrow \Z.
\end{equation}

Given a Dirac type operator $D$ on $G/K$, we can form its $\Gamma$-index $\mathrm{Ind}_{\Gamma}(D)$ which lands in $K_0(C^*(\Gamma))$ following the recipe we outlined right before Corollary \ref{lem:resrep}. If we choose a properly supported parametrix, the index class can be presented in a smaller algebra  $\C\Gamma \otimes \mathcal{R}$ where
$\mathcal{R}$ is the algebra of operators with smooth kernels and compact support. In particular, the $\Gamma$-index homomorphism $\mathrm{Ind}_\Gamma$ factors through
the $L^1$-index homomorphism  $K^{\Gamma}_0(G/K)\xrightarrow{\mathrm{Ind}_{\Gamma, L^1}}  K_*(L^1(\Gamma))$. That is, there is $\mathrm{Ind}_{\Gamma, L^1}(D) \in K_0(L^1(\Gamma))$  such that
$$\iota_*(\mathrm{Ind}_{\Gamma, L^1}(D))=\mathrm{Ind}_{\Gamma}(D).$$
The following lemma is proved in Definition 5.7 and Proposition 5.9 in~\cite{WW16}.

\begin{lemma}
\label{lem:local.ind.K}
\[
\tau_{\gamma,*}(\mathrm{Ind}_{\Gamma, L^1}(D))=\mathrm{ind}_{\gamma}(D).
\]
\end{lemma}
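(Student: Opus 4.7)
The plan is to combine a concrete representation of $\mathrm{Ind}_{\Gamma, L^1}(D)$ in a subalgebra on which $\tau_\gamma$ naturally lives with a McKean--Singer style heat kernel computation. First, I would extend $\tau_\gamma$ to a trace $\tau_\gamma \otimes \mathrm{Tr}$ on $L^1(\Gamma) \otimes \mathcal{R}$, where $\mathcal{R}$ is the algebra of compactly supported smoothing operators on $G/K$. Explicitly, on a finite sum $\sum_{g\in\Gamma} g \otimes T_g$ with $T_g$ represented by a smooth compactly supported kernel $K_g(x,y)$, the extension is
\[
(\tau_\gamma \otimes \mathrm{Tr})\Big(\sum_g g \otimes T_g\Big) = \sum_{g \in (\gamma)} \int_{G/K} c(x) \, \mathrm{Tr}(g^{-1} K_g(gx, x)) \, dx,
\]
where $c$ is the cutoff function for the $\Gamma$-action. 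A routine unfolding using $\sum_{\gamma \in \Gamma} c(\gamma x) = 1$ verifies this is a well-defined trace agreeing with $\tau_\gamma$ through $\iota_*$ on $K_0$.

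Second, using the properly supported parametrix $Q$ of $D$ (as described before the lemma), write $\mathrm{Ind}_{\Gamma, L^1}(D)$ as a Mishchenko--Fomenko idempotent in matrices over $(L^1(\Gamma) \otimes \mathcal{R})^+$, built from the smoothing operators $S_0 = 1 - QD$ and $S_1 = 1 - DQ$. On such a representative, applying $\tau_\gamma \otimes \mathrm{Tr}$ yields an integer-valued expression that by the trace property is independent of the choice of parametrix. The strategy is then to deform $Q$ through a one-parameter family of smoothing operators (for example through $(1-e^{-tD^2})D^{-1}$, interpreted via functional calculus) so that the class is represented by the heat operator. Under this deformation, the Mishchenko--Fomenko idempotent becomes the Connes--Moscovici heat idempotent whose image under $\tau_\gamma \otimes \mathrm{Tr}$ is exactly
\[
\mathrm{tr}^{(\gamma)} e^{-t D^- D^+} - \mathrm{tr}^{(\gamma)} e^{-t D^+ D^-} = \mathrm{tr}_s^{(\gamma)} e^{-t D^2} = \mathrm{ind}_\gamma(D),
\]
as required.

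The main obstacle is reconciling the compact-support requirement (which puts the parametrix representative in $L^1(\Gamma) \otimes \mathcal{R}$) with the non-compactly-supported nature of the heat kernel. The resolution is to work in a slightly larger Fr\'echet subalgebra of $C^*(\Gamma)$ on which $\tau_\gamma$ extends continuously; this extension is available thanks to the $L^p$-Schwartz class decay of the heat kernel established in \cite{BM83}, combined with standard bounds on the growth of the $\gamma$-conjugacy class in $\Gamma$ for uniform lattices. Once one shows that the inclusion of this Fr\'echet algebra into $C^*(\Gamma)$ induces an isomorphism (or at least injection) on the relevant $K_0$ element and that $\tau_\gamma$ remains a continuous trace, the deformation argument closes the proof.
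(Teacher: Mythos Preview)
The paper does not supply a proof of this lemma at all: the sentence immediately preceding the statement reads ``The following lemma is proved in Definition~5.7 and Proposition~5.9 in~\cite{WW16}.'' So your proposal is being compared not with an argument in the present paper but with the content of that reference. The outline you give---represent $\mathrm{Ind}_{\Gamma,L^1}(D)$ by a Mishchenko--Fomenko idempotent built from a properly supported parametrix, deform to the Connes--Moscovici heat idempotent, and evaluate the $(\gamma)$-trace by a McKean--Singer identity---is indeed the architecture of the argument in~\cite{WW16}, so in spirit you are reconstructing the cited proof.

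Two points of friction. First, the displayed formula you give for $\tau_\gamma\otimes\mathrm{Tr}$ is not the tensor-product trace on $L^1(\Gamma)\otimes\mathcal{R}$ (which is simply $\sum_{g\in(\gamma)}\int_{G/K}\mathrm{tr}\,K_{T_g}(x,x)\,dx$) but rather the $(\gamma)$-trace $\mathrm{tr}^{(\gamma)}$ on the $\Gamma$-invariant smoothing operator corresponding to $\sum_g g\otimes T_g$ under the cutoff identification. These agree, and establishing that agreement is precisely the content of the cited definitions in~\cite{WW16}; but as written you have conflated the two sides rather than stated one and shown it equals the other. Second, your proposed resolution of the support issue---passing to ``a slightly larger Fr\'echet subalgebra of $C^*(\Gamma)$'' on which $\tau_\gamma$ extends, using conjugacy-class growth bounds---is both unnecessary and not known to be available for arbitrary uniform lattices (cf.\ the remark after Theorem~\ref{thm:KSTF}, which invokes hyperbolicity or polynomial conjugacy growth as extra hypotheses). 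The approach in~\cite{WW16} stays on the operator side: one enlarges $\mathcal{R}$ to $\Gamma$-invariant operators with Schwartz-type kernels, where $\mathrm{tr}^{(\gamma)}$ is defined directly and the heat deformation lives, and never needs a holomorphically closed subalgebra of $C^*(\Gamma)$.
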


Then together with (\ref{eq:dec.1.conj})  we have
\[
{\bf 1}_*(\mathrm{Ind}_{\Gamma}(D))=\sum_{(\gamma) \in\langle\Gamma\rangle} \mathrm{ind}_{\gamma}(D).
\]
Let $D_{\Gamma}$ be the operator on the quotient orbifold $\Gamma\backslash G/K$ induced by $D$. Note that ${\bf 1}_*(\mathrm{Ind}_{\Gamma}(D))$ is equal to the Fredholm index of $D_{\Gamma}$ (see \cite{Bunke}).
Therefore, we have an equality of indices obtained from an identity on $K$-theory:
\[
\mathrm{ind}(D_{\Gamma})=\sum_{(\gamma) \in\langle\Gamma\rangle} \mathrm{ind}_{\gamma}(D).
\]
The reader should compare this to (\ref{eq:index.decomp.conj}).

\subsection{The restriction map}
Let $\Gamma\subset G$ be a discrete subgroup and consider the restriction map
$$\rho :C_{c}(G)\to C_{c}(\Gamma)\subset C^{*}(\Gamma),$$
defined by $\rho(f)(\gamma):=f(\gamma)$. The map $\rho$ defines a positive definite $C^{*}(\Gamma)$ valued inner product on $C_{c}(G)$ by
$$\langle f,h\rangle (\gamma):=\rho (f^{*}*h)(\gamma)=\int_{G}f^{*}(\xi)g(\xi^{-1}\gamma)d\mu_{G}(\xi).$$
Upon completion, a $(C^{*}(G),C^{*}(\Gamma))$-$C^{*}$-bimodule, denoted ${\sf Res}$, where the action of $C^{*}(G)$ is induced by convolution on $C_{c}(G)$.  Denote by $[g]$ the class of $g$ in $G/\Gamma$.
The commutative $C^{*}$-algebra $C_{0}(G/\Gamma)$ acts on $C_{c}(G)$ from the left via
$$(f\cdot \phi)(g):=f([g])\phi(g),$$
and this extends to an action by adjointable endomorphisms of ${\sf Res}$. Elements of $C_{0}(G/\Gamma)$ do not act compactly unless $G$ is discrete.

By Rieffel's imprimitivity theorem (see \cite{Rieffel}) the algebra of compact endomorphisms of ${\sf Res}$ is
$$\mathbb{K}({\sf Res})=C^{*}(G\ltimes G/\Gamma).$$
The dense subalgebra $C_{c}(G\ltimes G/\Gamma)$ acts on the dense submodule $C_{c}(G)\subset {\sf Res}$ via
$$(K\cdot \phi)(g):=\int_{G}K(\xi, [\xi^{-1}g])\phi(\xi^{-1}g)d\mu_{G}\xi.$$

\begin{lemma}
\label{cpt}
Let $f\in C_{c}(G)$ and $h\in C_{0}(G/\Gamma)$. Then $(f\cdot h)(g,x):=f(g)h(x)$ is an element of $C_{c}(G\ltimes G/\Gamma)$ whose action on $C_{c}(G)$ is given by
\begin{equation}\label{action}
(f\cdot h)\phi(g)=\int_{G}f(\xi)h([\xi^{-1}g])\phi(\xi^{-1}g)d\mu_{G}\xi=\int_{G}f(\xi)(h\cdot \phi)(\xi^{-1}g)d\mu_{G}\xi.\end{equation}
Consequently we have that $C^{*}(G)\cdot C_{0}(G/\Gamma)\subset \mathbb{K}({\sf Res})$.
\end{lemma}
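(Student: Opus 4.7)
The plan is to verify the pointwise formula for the action by direct substitution, and then to deduce the containment in $\mathbb{K}({\sf Res})$ by density. First, if $h$ has compact support (the general case $h\in C_{0}(G/\Gamma)$ being handled at the end by approximation), then $(g,x)\mapsto f(g)h(x)$ is a continuous function with support $\mathrm{supp}(f)\times\mathrm{supp}(h)$, hence lies in $C_{c}(G\ltimes G/\Gamma)$.

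For the action formula, substitute $K(g,x)=f(g)h(x)$ into the given formula for the action of $C_{c}(G\ltimes G/\Gamma)$ on $C_{c}(G)\subset {\sf Res}$: this yields the first equality tautologically. For the second equality, recall that by definition $C_{0}(G/\Gamma)$ acts on $C_{c}(G)\subset {\sf Res}$ from the left by $(h\cdot\phi)(g):=h([g])\phi(g)$, so $h([\xi^{-1}g])\phi(\xi^{-1}g)=(h\cdot\phi)(\xi^{-1}g)$ and the integrand becomes precisely that for convolution of $f$ with $h\cdot\phi$.

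For the consequence $C^{*}(G)\cdot C_{0}(G/\Gamma)\subset \mathbb{K}({\sf Res})$, Rieffel's imprimitivity theorem (already invoked above) identifies $\mathbb{K}({\sf Res})$ with $C^{*}(G\ltimes G/\Gamma)$, the norm-closure of $C_{c}(G\ltimes G/\Gamma)$. For $f\in C_{c}(G)$ and $h\in C_{c}(G/\Gamma)$ the element $f\cdot h$ lies in this dense subalgebra, and hence in $\mathbb{K}({\sf Res})$. Approximating a general $a\in C^{*}(G)$ by $f_{n}\in C_{c}(G)$ and an arbitrary $h\in C_{0}(G/\Gamma)$ by $h_{n}\in C_{c}(G/\Gamma)$, the products $f_{n}\cdot h_{n}$ converge in operator norm to $a\cdot h$ via the estimate $\|a\cdot h\|\leq \|a\|_{C^{*}(G)}\|h\|_{\infty}$; since $\mathbb{K}({\sf Res})$ is norm-closed the limit lies in it. The main point requiring attention is this norm estimate, which rests on two facts standard for Hilbert $C^{*}$-modules: the convolution action of $C^{*}(G)$ on ${\sf Res}$ is by adjointable operators of norm at most the $C^{*}(G)$-norm, and pointwise multiplication by $h\in C_{0}(G/\Gamma)$ defines an adjointable endomorphism of ${\sf Res}$ of norm $\|h\|_{\infty}$.
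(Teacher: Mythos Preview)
Your proof is correct and follows essentially the same approach as the paper's own proof, which is extremely terse: it simply notes $\supp(f\cdot h)\subset \supp f\times\supp h$ to obtain compact support, and declares that ``the remaining claims now follow by direct calculation.'' Your write-up supplies the details the paper omits, in particular the approximation step passing from $C_{c}(G)\cdot C_{c}(G/\Gamma)$ to $C^{*}(G)\cdot C_{0}(G/\Gamma)$ via the operator-norm bound, and the observation that the second equality in \eqref{action} identifies $f\cdot h$ with the composition of the two adjointable actions.

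One minor remark: the support of $f\cdot h$ is \emph{contained in} $\supp f\times\supp h$ rather than equal to it, and your handling of the case $h\in C_{0}(G/\Gamma)\setminus C_{c}(G/\Gamma)$ by approximation is in fact more careful than the paper's statement, which literally asserts $f\cdot h\in C_{c}(G\ltimes G/\Gamma)$ for all $h\in C_{0}(G/\Gamma)$ (this is only immediate when $G/\Gamma$ is compact, which is the case of interest in the subsequent proposition).
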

\begin{proof} The function $f\cdot h$ satisfies $\supp (f\cdot h)\subset \supp f\times \supp h$ and therefore has compact support, that is it is an element of
$C_{c}(G\ltimes G/\Gamma)$. The remaining claims now follow by direct calculation.
\end{proof}

\begin{proposition}
\label{prop:restriction}
 Let $\Gamma\subset G$ be a discrete cocompact group and ${\sf Res}$ the associated $(C^{*}(G),C^{*}(\Gamma))$-bimodule defined above. Then $C^{*}(G)$ acts on ${\sf Res}$ by compact module endomorphisms. Hence the bimodule ${\sf Res}$ defines an element $[{\sf Res}]\in KK_{0}(C^{*}(G), C^{*}(\Gamma))$.
\end{proposition}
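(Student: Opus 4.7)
My plan is to upgrade Lemma \ref{cpt}, which only shows that the subalgebra $C^*(G) \cdot C_0(G/\Gamma)$ of $\mathcal{L}({\sf Res})$ acts by compact operators, into a statement about all of $C^*(G)$, and then invoke the standard construction of a KK-class associated to a correspondence with compact left action.

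The cocompactness of $\Gamma$ enters as follows. Since $G/\Gamma$ is compact, the algebra $C_0(G/\Gamma) = C(G/\Gamma)$ is unital, and its unit, the constant function $\mathbf{1}$, acts on the dense submodule $C_c(G) \subset {\sf Res}$ as the identity operator by the formula $(h \cdot \phi)(g) = h([g])\phi(g)$ recalled just before Lemma \ref{cpt}. By continuity, $\mathbf{1}$ acts as the identity on all of ${\sf Res}$.

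Next, I would apply Lemma \ref{cpt} with $h = \mathbf{1}$. For every $f \in C_c(G)$ the element $f \cdot \mathbf{1}$ lies in $\mathbb{K}({\sf Res})$, and by \eqref{action} its action on $\phi \in C_c(G)$ is
\[
(f \cdot \mathbf{1})\phi(g) = \int_G f(\xi)\, \phi(\xi^{-1} g)\, d\mu_G(\xi),
\]
which is exactly the convolution action of $f \in C^*(G)$ on ${\sf Res}$. Hence the image of $C_c(G)$ in $\mathcal{L}({\sf Res})$ is contained in $\mathbb{K}({\sf Res})$. Since $C_c(G)$ is dense in $C^*(G)$ and $\mathbb{K}({\sf Res})$ is norm-closed in $\mathcal{L}({\sf Res})$, it follows that all of $C^*(G)$ acts on ${\sf Res}$ by compact endomorphisms.

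Finally, I would package this as a Kasparov cycle. Equip ${\sf Res}$ with the trivial $\mathbb{Z}/2$-grading (even part $= {\sf Res}$, odd part $= 0$) and take $F = 0$. The only nontrivial Kasparov condition is that $(F^2 - 1)\phi(a) = -\phi(a)$ be compact for every $a \in C^*(G)$, which is precisely what the previous paragraph establishes; the conditions $(F - F^*)\phi(a) = 0$ and $[F, \phi(a)] = 0$ are automatic. This produces the desired class $[{\sf Res}] \in KK_0(C^*(G), C^*(\Gamma))$. I do not expect a genuine obstacle: the entire argument rests on the observation that $\mathbf{1} \in C(G/\Gamma)$ acts as the identity on ${\sf Res}$, which is the only place cocompactness is used, and is immediate from the formulas already written down.
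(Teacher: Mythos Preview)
Your proof is correct and follows essentially the same approach as the paper: use cocompactness to get $\mathbf{1}\in C_{0}(G/\Gamma)$, apply Lemma~\ref{cpt} with $h=\mathbf{1}$ to see that the convolution action of $f\in C_{c}(G)$ coincides with the compact operator $f\cdot\mathbf{1}$, and then pass to the closure. Your explicit description of the Kasparov cycle with $F=0$ and verification of the conditions is a bit more detailed than the paper's ``the remaining claims now follow directly,'' but the argument is the same.
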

\begin{proof} As $G/\Gamma$ is compact we have that $C_{0}(G/\Gamma)=C(G/\Gamma)$ and the constant function $1\in C_{0}(G/\Gamma)$. By Lemma \ref{cpt} it follows that $f\cdot 1\in \mathbb{K}({\sf Res})$. It follows from Equation \eqref{cpt} that the action of $f$ on $C_{c}(G)$ coincides with that of $f\cdot 1$ on $C_{c}(G)$ and thus $C_{c}(G)$ acts by compact endomorphisms. Hence by continuity all of $C^{*}(G)$ acts compactly. The remaining claims now follow directly.
\end{proof}

\begin{definition}
\label{def:res}
The restriction map
\[
\mathrm{res}: K_0(C^*(G))\rightarrow K_0(C^*(\Gamma))
\]
is given by the KK-product with ${\sf Res} \in KK_0(C^*(G), C^*(\Gamma))$ in Proposition~\ref{prop:restriction}:
\[
\mathrm{res}([x]):=[x]\otimes_{C^*(G)} [{\sf Res}].
\]
\end{definition}
Denote by ${\bf 1}: C^*(\Gamma)\rightarrow\C$ the $*$-homomorphism induced by the trivial representation of $\Gamma$ on $\C$:
\[
{\bf 1}:f\mapsto\sum_{\gamma\in\Gamma}f(\gamma), \qquad f\in C_c(\Gamma),
\]
and by $[\mathbf{1}]\in KK_{0}(C^{*}(\Gamma),\C)$ its $K$-homology class.
\begin{proposition}
\label{prop:comm}
We have a commutative diagram
\begin{equation}
\label{eq:res.com.}
\xymatrix@+2pc{K_*(C^*(G)) \ar[r]^{\ \ \ \ -\otimes[R^\Gamma]} \ar[d]_{{\rm res}} & \Z \\
                     K_*(C^*(\Gamma)) \ar[ur]_{\ \ \ -\otimes [\mathbf{1}]}&            }
\end{equation}
Here $[R^\Gamma]$ is the element in $K^0(C^*(G))$ determined by the morphism $R^\Gamma_*$.
\end{proposition}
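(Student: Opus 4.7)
The plan is to reduce the commutativity of the diagram, via associativity of the Kasparov product, to the single $KK$-identity
\[
[R^\Gamma] = [{\sf Res}]\otimes_{C^*(\Gamma)}[\mathbf{1}]\in KK_0(C^*(G),\C).
\]
Once this is in place, for any $x\in K_*(C^*(G))$ associativity gives
\[
\mathrm{res}(x)\otimes[\mathbf{1}] = (x\otimes_{C^*(G)}[{\sf Res}])\otimes_{C^*(\Gamma)}[\mathbf{1}] = x\otimes_{C^*(G)}\bigl([{\sf Res}]\otimes_{C^*(\Gamma)}[\mathbf{1}]\bigr)= x\otimes[R^\Gamma],
\]
which is exactly the commutativity of the triangle.

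To prove the $KK$-identity, I first observe that both classes are represented by Kasparov cycles with zero Fredholm operator: $[\mathbf{1}]$ is given by $(\C,0)$; by Proposition~\ref{prop:restriction} the $C^*(G)$-action on ${\sf Res}$ is compact, so $[{\sf Res}]$ is given by the degenerate cycle $({\sf Res},0)$; and since $R^\Gamma$ maps $C^*(G)$ into $\mathbb{K}(L^2(\Gamma\backslash G))$, the class $[R^\Gamma]$ is represented by $(L^2(\Gamma\backslash G),0)$. The Kasparov product of cycles with zero operator is simply the internal tensor product of their Hilbert modules, so the identity reduces to exhibiting a $C^*(G)$-equivariant unitary isomorphism
\[
U:{\sf Res}\otimes_{\mathbf{1}}\C\longrightarrow L^2(\Gamma\backslash G).
\]

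The natural candidate is the map defined on the dense subspace $C_c(G)\otimes 1$ by
\[
U(\phi\otimes 1)(g):=\sum_{\gamma\in\Gamma}\phi(g^{-1}\gamma),
\]
which is a locally finite sum; the reindexing $\gamma\mapsto\gamma_0^{-1}\gamma$ shows $U(\phi\otimes 1)$ is $\Gamma$-left-invariant, hence lies in $L^2(\Gamma\backslash G)$. Verifying that $U$ is isometric and intertwines the $C^*(G)$-actions is then an exercise in the Poincar\'e unfolding identity $\sum_\gamma\int_{\Gamma\backslash G}F(\gamma^{-1}g)\,dg=\int_G F(h)\,dh$: expanding $|U(\phi\otimes 1)(g)|^2$ as a double sum over $\Gamma\times\Gamma$ and applying this identity recovers $\sum_\gamma\int_G\overline{\phi(\eta)}\phi(\eta\gamma)\,d\eta=\mathbf{1}(\langle\phi,\phi\rangle_{\sf Res})$, while a direct change of variable in $\int_G f(h)\phi(h^{-1}g^{-1}\gamma)\,dh$ gives $U((f*\phi)\otimes 1)(g)=\int_G f(h)U(\phi\otimes 1)(gh)\,dh=R^\Gamma(f)U(\phi\otimes 1)(g)$. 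Extending $U$ by continuity yields the desired unitary, and with it the $KK$-identity. I expect the only mildly delicate point of the argument to be the bookkeeping of left versus right $\Gamma$-actions, so that the unfolding identity matches precisely the definition of the ${\sf Res}$-valued inner product; associativity of the Kasparov product and Proposition~\ref{prop:restriction} handle everything else.
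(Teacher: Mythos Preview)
Your proof is correct and follows essentially the same route as the paper: both reduce the commutativity to the $C^*(G)$-equivariant unitary isomorphism ${\sf Res}\otimes_{\mathbf{1}}\C\simeq L^2(\Gamma\backslash G)$ given by exactly your formula $\phi\otimes 1\mapsto\sum_{\gamma}\phi(g^{-1}\gamma)$, verified by the same inner-product computation. The only differences are cosmetic---you invoke associativity of the Kasparov product up front rather than carrying a test cycle $(X,F)$ throughout---and you should make explicit that $U$ has dense range (the paper handles this with a partition-of-unity argument), since an isometry extended by continuity is not automatically surjective.
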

\begin{proof}
Let $(X,F)$ be a $(\C,C^{*}(G))$ Kasparov module with $\C$ acting unitally (as in the proof of Proposition \ref{locally-finite}).
Then by definition \[[(X,F)]\otimes_{C^*(G)}[R^\Gamma] = R^\Gamma_*([(X,F)])\in\Z,\] is given by the index of the Fredholm operator 
\[
F\otimes 1: X\otimes_{R^{\Gamma}} L^{2}(\Gamma\setminus G)\to X\otimes_{R^{\Gamma}} L^{2}(\Gamma\setminus G).
\]
On the other hand,
\begin{align*}
{\rm res}([(X,F)]) \otimes_{C^*(\Gamma)} [\mathbf{1}]&= [(X,F)] \otimes_{C^*(G)} [{\sf Res}] \otimes_{C^*(\Gamma)} [\mathbf{1}] \\
&=[(X\otimes_{C^{*}(G)}{\sf Res} \otimes_{C^*(\Gamma)}\C, F\otimes 1\otimes 1)].
\end{align*}
By Proposition \ref{prop:restriction} we have $C^{*}(G)\to \mathbb{K}({\sf Res})$ and the trivial representation $\mathbf{1}$ is finite dimensional. Thus a successive application of \cite[Proposition 4.7]{Lance} shows that
\[(F\otimes 1\otimes 1)^{2}-1=(F^{2}-1)\otimes 1\otimes 1\in \mathbb{K}(X)\otimes 1\subset \mathbb{K}(X\otimes_{C^{*}(G)}{\sf Res} \otimes_{C^*(\Gamma)}\C),\]
proving that $F\otimes 1\otimes 1$ is Fredholm. Therefore ${\rm res}([(X,F)]) \otimes_{C^*(\Gamma)} [\mathbf{1}]\in\Z$ equals the index of $F\otimes 1\otimes 1$ and to show the diagram commutes, it is sufficient to show that there is a unitary isomorphism
\begin{equation}
\label{eq:isom}
\Phi: {\sf Res} \otimes_{C^*(\Gamma)}\C\xrightarrow{\sim} L^2(\Gamma\setminus G),
\end{equation}
of Hilbert spaces intertwining the $C^{*}(G)$-representations. 
Define a map 
\[
\Phi: C_{c}(G)\otimes_{C_{c}(\Gamma)}\C\to C( \Gamma\setminus G),\qquad \Phi(f)(\Gamma g)=\sum_{\gamma\in\Gamma}f(g^{-1}\gamma).
\]
Note that the above map is well defined, i.e., the right hand sides coincide for $\Gamma g=\Gamma g'.$
Then on one hand
\begin{align*}
\langle \Phi(f), \Phi(h) \rangle=&\int_{G/\Gamma}\overline{\Phi(f)(\Gamma g)}\Phi(h)(\Gamma g){\rm d}(\Gamma g) \\
=&\int_{G/\Gamma}\sum_{\gamma\in\Gamma}\overline{f(g^{-1}\gamma)}\sum_{\delta\in\Gamma}h(g^{-1}\delta){\rm d}(\Gamma g)\\
=&\sum_{\delta\in\Gamma}\int_G\overline{f(g^{-1})}h(g^{-1}\delta){\rm d}g,
\end{align*}
whereas on the other hand 
\[
\langle f\otimes 1, g\otimes 1\rangle=\sum_{\delta\in\Gamma}\langle f, h\rangle_{C^*(\Gamma)}(\delta)=\sum_{\delta\in\Gamma}\int_G\overline{f(g^{-1})}h(g^{-1}\delta){\rm d}g=\langle \Phi(f), \Phi(h) \rangle.
\]
Hence the map $\Phi$ is an isometry and extends to an injection. Since the action of $\Gamma$ on $G$ is properly discontinuous, functions of small support on $\Gamma\setminus G$ are in the image of $\Phi$, and a partition of unity argument shows that $\Phi$ is surjective, so it extends to a unitary isomorphism. For $\xi\in G$ we have
\[\xi(\Phi(f))(\Gamma g)=\Phi(f)(\Gamma g\xi)=\sum_{\gamma\in\Gamma} f(\xi^{-1}g^{-1}\gamma)=\sum_{\gamma\in\Gamma} (\xi f)(g^{-1}\gamma) =\Phi(\xi f)(\Gamma g),\]
hence $\Phi$ intertwines the $G$-representations and therefore the $C^{*}(G)$-  representations. Thus we proved the isomorphism~(\ref{eq:isom}). 
\end{proof}

\subsection{Selberg trace formula in K-theory}

We are ready for the $K$-theoretic Selberg trace formula. Let $\iota: L^1(\Gamma)\rightarrow C^*(\Gamma)$ be the natural inclusion and $\iota_*: K_0(L^1(\Gamma))\rightarrow K_0(C^*(\Gamma))$ be the associated map. Putting the preceding discussions together, we obtain the following.
\begin{theorem}\label{thm:KSTF} Let $x \in K_0(C^*(G))$. Assume that there exists $y\in K_0(L^1(\Gamma))$ such that $\iota_*(y)=\mathrm{res}(x) \in K_0(C^*(\Gamma))$. Then
$$\boxed{\sum_{\pi\in\widehat G}m_{\Gamma}(\pi)\pi_*(x)=\sum_{(\gamma) \in\langle\Gamma\rangle}\tau_{\gamma,*}(y).}$$
\end{theorem}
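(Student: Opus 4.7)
The plan is to chain together the three decompositions established in the preceding subsections; with all the analytic spadework already done, the proof reduces to a short diagram chase.

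First, I would invoke Corollary~\ref{R_*-decomposition} to rewrite the spectral side as
$$\sum_{\pi\in\widehat G}m_{\Gamma}(\pi)\pi_*(x)=R^\Gamma_*(x)=x\otimes_{C^*(G)}[R^\Gamma],$$
noting that this is a finite sum by Proposition~\ref{locally-finite}. This identifies the left-hand side with a Kasparov product taken against the $K$-homology class of the quasi-regular representation.

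Next, I would apply the commutative diagram of Proposition~\ref{prop:comm}: associativity of the Kasparov product and the identity $[R^\Gamma]=[{\sf Res}]\otimes_{C^*(\Gamma)}[\mathbf{1}]$ proved there yield
$$x\otimes_{C^*(G)}[R^\Gamma]= \bigl(x\otimes_{C^*(G)}[{\sf Res}]\bigr)\otimes_{C^*(\Gamma)}[\mathbf{1}]=\mathbf{1}_*(\mathrm{res}(x)).$$
At this stage the hypothesis enters: since by assumption $\mathrm{res}(x)=\iota_*(y)$ for some $y\in K_0(L^1(\Gamma))$, we may replace this by $\mathbf{1}_*\iota_*(y)$.

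Finally, I would apply the decomposition \eqref{eq:dec.1.conj}, which expresses $\mathbf{1}_*\iota_*$ as the sum $\sum_{(\gamma)\in\langle\Gamma\rangle}\tau_{\gamma,*}$ of homomorphisms $K_0(L^1(\Gamma))\to\Z$, to conclude that
$$\mathbf{1}_*\iota_*(y)=\sum_{(\gamma)\in\langle\Gamma\rangle}\tau_{\gamma,*}(y),$$
which is exactly the geometric side. No step here is genuinely obstructive: all the difficulty is concentrated in the preliminary results, namely that $[{\sf Res}]\in KK_0(C^*(G),C^*(\Gamma))$ exists (Proposition~\ref{prop:restriction}), that $L^2(\Gamma\backslash G)\simeq {\sf Res}\otimes_{C^*(\Gamma)}\C$ as $C^*(G)$-representations (Proposition~\ref{prop:comm}), and that the $\tau_\gamma$ extend to traces on $L^1(\Gamma)$ descending to $K_0$ (the discussion preceding \eqref{eq:dec.1.conj}). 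The role of the liftability hypothesis $\mathrm{res}(x)=\iota_*(y)$ is precisely to make the right-hand side meaningful, since the localised traces $\tau_\gamma$ are a priori only defined on the dense Banach subalgebra $L^1(\Gamma)\subset C^*(\Gamma)$ and need not extend to $K_0(C^*(\Gamma))$ without further growth assumptions on $\Gamma$.
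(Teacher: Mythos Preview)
Your proposal is correct and matches the paper's own argument exactly: the paper states the theorem as an immediate consequence of ``putting the preceding discussions together,'' meaning precisely the chain Corollary~\ref{R_*-decomposition} $\Rightarrow$ Proposition~\ref{prop:comm} $\Rightarrow$ hypothesis $\iota_*(y)=\mathrm{res}(x)$ $\Rightarrow$ equation~\eqref{eq:dec.1.conj}, which is what you spell out. One small slip: the individual $\tau_{\gamma,*}$ are homomorphisms $K_0(L^1(\Gamma))\to\C$, not $\to\Z$; only their sum $\mathbf{1}_*\iota_*$ is integer-valued.
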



\begin{remark}

We can drop the hypothesis on $x$ by putting conditions on $\Gamma$. For example, if $\Gamma$ admits the polynomial growth condition of each of its conjugacy classes (see~\cite{S}), or if $\Gamma$ is a word hyperbolic group (see~\cite{Puschnigg}), then the trace $\tau_{\gamma}: L^1(\Gamma) \rightarrow\C$ can be extended to a subalgebra of $C^*(\Gamma)$ that is stable under holomorphic functional calculus and thus gives rise to a well-defined morphism
\[
\tau_{\gamma, *}: K_0(C^*(\Gamma))\rightarrow\C.
\]
It then follows that
$$\sum_{\pi\in\widehat G}m_{\Gamma}(\pi)\pi_*(x)=\sum_{(\gamma) \in\langle\Gamma\rangle}\tau_{\gamma,*}({\rm res}\, (x))$$
for {\em all} $x\in K_0(C^*(\Gamma))$.

\end{remark}

With a little more work, we can derive a general index theoretic trace formula from the above. For this, we need to bring in the equivariant $K$-homology groups into the picture.

\begin{lemma}
\label{lem:comm}
The following diagram commutes:
$$\xymatrix@+2pc{K^G_0(G/K)\ar[r]^{\mathrm{Ind}_G}\ar[d]_{\mathfrak{r}} & K_*(C^*(G))  \ar[d]_{{\rm res}}  \\
                   K^{\Gamma}_0(G/K)\ar[r]^{\mathrm{Ind}_{\Gamma}} & K_*(C^*(\Gamma)) &            }$$
where the left vertical arrow $\mathfrak{r}$ is the restriction map in equivariant $KK$-theory.
\end{lemma}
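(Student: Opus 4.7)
The plan is to use the naturality of Kasparov descent with respect to the forgetful functor from $G$-equivariance to $\Gamma$-equivariance. For any $G$-$C^{*}$-algebra $A$, the same construction as in Proposition~\ref{prop:restriction} (completing $C_c(G, A)$ in a natural $A \rtimes \Gamma$-valued inner product) yields a class $[{\sf Res}_A] \in KK_0(A \rtimes G, A \rtimes \Gamma)$, specializing to $[{\sf Res}]$ when $A = \C$. The key ingredient is the naturality identity: for every $x \in KK^G(A, B)$,
\[
j^G(x) \otimes_{B \rtimes G} [{\sf Res}_B] = [{\sf Res}_A] \otimes_{A \rtimes \Gamma} j^\Gamma(\mathfrak{r}(x)) \in KK(A \rtimes G, B \rtimes \Gamma).
\]
This is verified at the level of Kasparov cycles by directly comparing the two descended modules, using that each side amounts to ``descend the cycle $x$ along $G$ and then restrict to $\Gamma$''.

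Applying this identity with $A = C_0(G/K)$, $B = \C$, and an element $x \in K^G_0(G/K)$, together with Definition~\ref{def:res} of $\mathrm{res}$, one computes
\begin{align*}
\mathrm{res}(\mathrm{Ind}_G(x))
&= [p] \otimes_{C_0(G/K) \rtimes G} j^G(x) \otimes_{C^*(G)} [{\sf Res}] \\
&= [p] \otimes_{C_0(G/K) \rtimes G} [{\sf Res}_{C_0(G/K)}] \otimes_{C_0(G/K) \rtimes \Gamma} j^\Gamma(\mathfrak{r}(x)).
\end{align*}
The lemma then reduces to the compatibility of cutoff projections, namely
\[
[p] \otimes_{C_0(G/K) \rtimes G} [{\sf Res}_{C_0(G/K)}] = [p_\Gamma] \in K_0(C_0(G/K) \rtimes \Gamma),
\]
where $p_\Gamma(x, \gamma) := c_\Gamma(\gamma x) c_\Gamma(x)$ is the analogous projection built from a $\Gamma$-cutoff function $c_\Gamma$ on $G/K$. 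Since $\Gamma$ is cocompact in $G$, one may arrange that $c_\Gamma(x)^2 = \sum_{\gamma \in \Gamma} c(\gamma^{-1} x)^2$ where $c$ is the $G$-cutoff used to build $p$, and a direct calculation of the bimodule action on $[p]$ yields the displayed equality. Combining with the defining formula $\mathrm{Ind}_\Gamma(\mathfrak{r}(x)) = [p_\Gamma] \otimes_{C_0(G/K) \rtimes \Gamma} j^\Gamma(\mathfrak{r}(x))$ closes the diagram.

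The principal obstacle is the naturality identity displayed in the first paragraph. While it is in principle a consequence of the functoriality of Kasparov descent together with the definition of the restriction bimodule, making the two descended cycles manifestly equal requires careful bookkeeping of the convolution structures on $C_c(G, A)$ versus $C_c(\Gamma, A)$. Once this is established, the final compatibility of the cutoff projections through ${\sf Res}_{C_0(G/K)}$ is a routine verification at the level of projections.
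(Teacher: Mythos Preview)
Your approach is correct in outline but takes a genuinely different route from the paper's proof. The paper does not invoke the general descent formalism at all: instead it uses the concrete model of cycles in $K^G_0(G/K)$ as pairs $([L^2(G)\otimes V]^K, F)$ with $F$ properly supported and $G$-invariant, and writes their images under $\mathrm{Ind}_G$ and $\mathrm{Ind}_\Gamma\circ\mathfrak r$ directly as Kasparov modules $(\mathcal E_G,F)$ and $(\mathcal E_\Gamma,F)$ over $C^*(G)$ and $C^*(\Gamma)$ respectively. The whole proof then reduces to the single observation that the dense submodule $[C_c(G)\otimes V]^K\otimes_{C_c(G)} C_c(G)_{C_c(\Gamma)}$ is isomorphic, compatibly with the inner products, to $[C_c(G)_{C_c(\Gamma)}\otimes V]^K$, whence $\mathcal E_\Gamma \simeq \mathcal E_G\otimes_{C^*(G)}{\sf Res}$. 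No cutoff projections or coefficient versions ${\sf Res}_A$ enter.

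What each approach buys: the paper's argument is short and self-contained, exploiting the special form of $G/K$ and the identification of the higher index with the Mishchenko--Fomenko module $(\mathcal E_G,F)$ rather than the $[p]\otimes j^G(-)$ picture. Your route via naturality of descent is more conceptual and would prove the analogous statement for any proper cocompact $G$-space, not just $G/K$; but it carries real overhead. The naturality identity $j^G(x)\otimes[{\sf Res}_B]=[{\sf Res}_A]\otimes j^\Gamma(\mathfrak r(x))$ is true, yet verifying it rigorously at the cycle level is essentially the same computation the paper does in its special case, only with coefficients. And your compatibility of cutoff projections is slightly off as written: the formula $c_\Gamma(x)^2=\sum_{\gamma\in\Gamma}c(\gamma^{-1}x)^2$ does not produce a $\Gamma$-cutoff from a $G$-cutoff; what you actually need (and what suffices) is that the $K$-theory class $[p_\Gamma]$ is independent of the choice of $\Gamma$-cutoff, so any representative of $[p]\otimes[{\sf Res}_{C_0(G/K)}]$ will do.
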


\begin{proof}
Every cycle representing an element of $K^G_0(G/K)$ is given by $([L^2(G)\otimes V]^K, F)$ where $V$ is a $\Z_2$-graded finite dimensional representation of $K$, and $F$ is a bounded properly supported $G$-invariant operator on the Hilbert space $H=[L^2(G)\otimes V]^K.$ Its images under $\mathrm{Ind}_{G}$ and under $\mathrm{Ind}_{\Gamma} \circ \mathfrak{r}$ are represented by cycles of the form $(\mathcal{E}_{G}, F)$, $(\mathcal{E}_{\Gamma}, F)$.
Here the Hilbert $C^*(G)$-module $\mathcal{E}_G$ (resp. $C^*(\Gamma)$-module $\mathcal{E}_{\Gamma}$) is given by completion of $[C_c(G)\otimes V]^K$ with respect to the $C_c(G)$-valued ($C_c(\Gamma)$-valued) inner product determined by
\[
\langle f_1, f_2\rangle(t)=\int_G \langle f_1(s), tf_2(t^{-1}s)\rangle_{H}ds\qquad  f_1, f_2\in H
\]
for $t\in G$ ($t\in \Gamma$).
Recall that ${\sf Res}$ is the closure of $C_c(G).$
To show that
$$[(\mathcal{E}_{G}, F)]\otimes_{C^*(G)}[{\sf Res}]=[(\mathcal{E}_{\Gamma}, F)],$$
we only need to observe that $\mathcal{E}_{\Gamma}= \mathcal{E}_{G}\otimes_{C^*(G)} {\sf Res}$. This follows directly from the isomorphism of $C_c(\Gamma)$-modules
\[
[C_c(G)\otimes V]^K\otimes_{C_c(G)}C_c(G)_{C_c(\Gamma)}\simeq[C_c(G)_{C_c(\Gamma)}\otimes V]^K,
\]
which is compatible with the inner products.
\end{proof}

By Theorem~\ref{thm:KSTF} and Lemmas~\ref{lem:resrep}, \ref{lem:local.ind.K} and~\ref{lem:comm} we obtain the following theorem, giving the Selberg trace formula in its index theory form.

\begin{theorem}
\label{thm:mainresult}
The following diagram commutes
$$\xymatrix@+2pc{K^G_0(G/K)\ar[r]^{\mathrm{Ind}_G}\ar[d]_{\mathfrak{r}} & K_*(C^*(G)) \ar[dr]^{R^\Gamma_*=\sum\limits_{\pi\in\widehat G}m_{\Gamma}(\pi)\pi_*} \ar[d]_{{\rm res}} &  \\
                   K^{\Gamma}_0(G/K)\ar[r]_{\mathrm{Ind}_{\Gamma}}\ar[dr]_{\mathrm{Ind}_{\Gamma, L^1}}  & K_*(C^*(\Gamma)) \ar[r]_{\mathbf{1}_*}&            \Z \\
                   & K_0(L^1(\Gamma))\ar[u]^{\iota_*}\ar[ur]_{\sum\limits_{(\gamma) \in\langle \Gamma\rangle}\tau_{\gamma, *}} }.$$
In particular, if $D$ is a $G$-invariant Dirac type operator on $G/K$, the above commutative diagram gives rise to the equality
\[
\sum_{\pi\in\widehat G}m_{\Gamma}(\pi)\pi_*(\mathrm{Ind}_G D)=\sum_{(\gamma) \in\langle\Gamma\rangle}\tau_{\gamma, *}(\mathrm{Ind}_{\Gamma, L^1}D),
\]
which reduces to the index theoretic Selberg trace formula:
\begin{equation}
\label{eq:IndSTF}
\boxed{\sum_{\pi\in\widehat G}m_{\Gamma}(\pi)\mathrm{ind}D_{\pi}=\sum_{(\gamma) \in\langle\Gamma\rangle}\mathrm{ind}_{\gamma}D.}
\end{equation}
\end{theorem}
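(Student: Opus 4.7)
The plan is to assemble the large diagram by stitching together commutativity statements that have all been established earlier, and then to chase the class of a Dirac operator around its perimeter. Four ingredients feed into the diagram: (i) the outer right triangle is Proposition \ref{prop:comm}, which expresses $R^\Gamma_*$ as $\mathbf{1}_*\circ\mathrm{res}$; (ii) the upper-left square is Lemma \ref{lem:comm}, the compatibility of the restriction map $\mathfrak r$ in equivariant $KK$-theory with the higher index; (iii) the triangle on the left of the lower half is the factorisation $\iota_*\circ\mathrm{Ind}_{\Gamma,L^1}=\mathrm{Ind}_\Gamma$ coming from the fact that a properly supported parametrix for $D$ represents the $\Gamma$-index class in the smaller algebra $\C\Gamma\otimes\mathcal R\subset L^1(\Gamma)$, as recalled just before Lemma \ref{lem:local.ind.K}; (iv) the bottom triangle landing in $\Z$ is identity \eqref{eq:dec.1.conj}, $\mathbf{1}_*\circ\iota_*=\sum_{(\gamma)}\tau_{\gamma,*}$. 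Pasting (i)--(iv) along their shared edges yields the asserted commutative diagram; no new computation is needed for that part.

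For the second, concrete, assertion of the theorem I would chase the class $[D]\in K^G_0(G/K)$ around the perimeter. Along the top edge one gets
$$[D]\ \longmapsto\ \mathrm{Ind}_G D\ \longmapsto\ R^\Gamma_*(\mathrm{Ind}_G D)=\sum_{\pi\in\widehat G}m_\Gamma(\pi)\,\pi_*(\mathrm{Ind}_G D),$$
where the decomposition of $R^\Gamma_*$ is Corollary \ref{R_*-decomposition}, known to be a finite sum by Proposition \ref{locally-finite}. Corollary \ref{lem:resrep}(i) identifies each summand as $\pi_*(\mathrm{Ind}_G D)=\mathrm{ind}\,D_\pi$, producing the spectral side of \eqref{eq:IndSTF}. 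Going down the left column and across the bottom row one obtains
$$[D]\ \longmapsto\ \mathfrak r([D])\ \longmapsto\ \mathrm{Ind}_{\Gamma,L^1}(D)\ \longmapsto\ \sum_{(\gamma)\in\langle\Gamma\rangle}\tau_{\gamma,*}\bigl(\mathrm{Ind}_{\Gamma,L^1}(D)\bigr),$$
and Lemma \ref{lem:local.ind.K} recognises each term as $\mathrm{ind}_\gamma D$, giving the geometric side. Commutativity of the diagram forces the two expressions to agree, which is \eqref{eq:IndSTF}.

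Equivalently, and perhaps more cleanly, one can deduce \eqref{eq:IndSTF} directly from Theorem \ref{thm:KSTF} applied to $x=\mathrm{Ind}_G D$ with witness $y=\mathrm{Ind}_{\Gamma,L^1}(D)\in K_0(L^1(\Gamma))$: the required hypothesis $\iota_*(y)=\mathrm{res}(x)$ is exactly what Lemma \ref{lem:comm} together with $\iota_*\circ\mathrm{Ind}_{\Gamma,L^1}=\mathrm{Ind}_\Gamma$ provide, and the two sides of the boxed identity in Theorem \ref{thm:KSTF} are then rewritten using Corollary \ref{lem:resrep}(i) and Lemma \ref{lem:local.ind.K} respectively. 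The only genuinely non-formal input is therefore the existence of the $L^1$-lift $y$, which is where I would take the most care: one has to check that the properly supported parametrix really lands in $\C\Gamma\otimes\mathcal R$ and that the resulting $K_0(L^1(\Gamma))$-class is sent to $\mathrm{Ind}_\Gamma D$ by $\iota_*$, independently of the choices made. Once that is in place, Theorem \ref{thm:mainresult} is a diagram chase.
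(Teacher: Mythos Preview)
Your proposal is correct and matches the paper's own argument: the theorem is stated immediately after the sentence ``By Theorem~\ref{thm:KSTF} and Lemmas~\ref{lem:resrep}, \ref{lem:local.ind.K} and~\ref{lem:comm} we obtain the following theorem,'' and your decomposition of the diagram into the four pieces (i)--(iv) together with the diagram chase via Corollary~\ref{R_*-decomposition}, Corollary~\ref{lem:resrep}(i), and Lemma~\ref{lem:local.ind.K} is exactly what is intended. Your closing caveat about the $L^1$-lift is also the one genuine input the paper flags, in the paragraph preceding Lemma~\ref{lem:local.ind.K}.
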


\begin{remark}
Denote by $D_{\Gamma}$ the Dirac type operator on $\Gamma\backslash G/K$  whose lift to $G/K$ is $D$. Then
\[
\mathrm{ind}\, D_{\Gamma}=\sum_{(\gamma) \in\langle\Gamma\rangle}\mathrm{ind}_{\gamma}D.
\]
Also $[\mathrm{Ind}_GD]\otimes_{C^*(G)}[R^\Gamma]=R^\Gamma_*(\mathrm{Ind}_GD)=\sum_{\pi\in\widehat G}m_{\Gamma}(\pi)\mathrm{ind}D_{\pi}.$
Then~(\ref{eq:IndSTF}) is reduced to
\[
[\mathrm{Ind}_GD]\otimes_{C^*(G)}[R^\Gamma]=[D_{\Gamma}]\in KK(\C, \C)=\Z.
\]
This recovers Theorem~2.3 in~\cite{Fox-Haskell}.
\end{remark}


\end{document}